\theoremstyle{plain}
\numberwithin{equation}{section}
\newtheorem{thm}{Theorem}[section]
\newtheorem{cor}[thm]{Corollary}
\newtheorem{lem}[thm]{Lemma}
\newtheorem{prop}[thm]{Proposition}
\newtheorem*{prop*}{\scshape Proposition}
\newtheorem*{thm*}{\scshape Theorem}
\newtheorem*{lem*}{\scshape Lemma}
\newtheorem*{cor*}{\scshape Corollary}
\theoremstyle{definition}
\newtheorem{df}[thm]{Definition}
\newtheorem{rmk}[thm]{Remark}
\providecommand{\frac}[1]{\operatorname{Frac}(#1)}  
\providecommand{\Spec}[1]{\operatorname{Spec}(#1)} 	%
\newcommand{\Pic}{\operatorname{Pic}}		 
\renewcommand{\ker}{\operatorname{Ker}}				 
\renewcommand{\H}{\operatorname{H}}				     
\newcommand{\CH}{\operatorname{CH}}					 
\renewcommand{\lim}{\operatorname{lim}}			 
	\newcommand{\tensor}{\otimes}
	\renewcommand{\cong}{\simeq}
	\newcommand{\ol}{\overline}
\def\Xb{\ol{X}}
\newcommand{\cI}{\mathcal{I}}
\newcommand{\cL}{\mathcal{L}}
\newcommand{\cO}{\mathcal{O}}
\newcounter{elno}   
\newenvironment{romanlist}{
                         \begin{list}{\roman{elno})
                                     }{\usecounter{elno}}
                      }{
                         \end{list}}
\newcounter{elno-abc}   
\newenvironment{listabc}{
                         \begin{list}{\alph{elno-abc})
                                     }{\usecounter{elno-abc}}
                      }{
                         \end{list}}
\begin{document}
\author{Federico Binda}
\title{Torsion 0-cycles with modulus on affine varieties}
\subjclass[2010]{Primary 14C25; Secondary 19E15, 14F42}

\AtEndDocument{\bigskip{\footnotesize%
  (F.~Binda) \textsc{Fakult\"at f\"ur Mathematik,  Universit\"at Regensburg, Universit\"atsstr.~31, 93053 Regensburg, Germany.} 
  \textit{E-mail address},  \texttt{federico.binda@ur.de}}}
\begin{abstract}
In this note, we show that given a smooth affine variety $X$ over an algebraically closed field $k$ and an effective (possibly non reduced) Cartier divisor on it, the Chow group of zero cycles with modulus $\CH_0(X|D)$ is torsion free, except possibly for $p$-torsion if the characteristic of $k$ is $p>0$. This generalizes to the relative setting classical results of Rojtman (for $X$ smooth) and of Levine (for $X$ singular).
\end{abstract}

\maketitle




\setcounter{tocdepth}{1}

\section{Introduction}
\subsubsection{} Let $k$ be an algebraically closed field. If $X$ is a smooth projective variety over $k$, the classical theorem of Rojtman \cite{Rojtman} asserts that the Albanese map $\alpha_{X}\colon \CH_0(X)\to Alb_{X}(k)$ induces an isomorphism of torsion subgroups (modulo $p$-torsion in characteristic $p>0$, the $p$-part being later fixed by Milne in \cite{MilneTorsion}).
Since then, the theorem has been improved in several ways. 

Dropping the projective assumption and replacing the Chow group of $0$-cycles with the Suslin homology group $H_0^{S}(X, \mathbb{Z}) = H_0^{Sing}(X)$, Barbieri-Viale--Kahn \cite{Barbieri-Viale:2010aa} and Geisser \cite{MR3391879} recently proved independently that if $X$ is a reduced normal scheme over $k$, then the Albanese map induces an isomorphism  \[\alpha_X\colon H_0^{Sing}(X)_{tors}\xrightarrow{\sim} Alb_X(k)_{tors}\] up to $p$-torsion groups, refining previous results by {Spie\ss} and Szamuely \cite{SS}.

In a different direction, closer to the original geometric proof, Levine gave  in \cite{MarcTorsion} a generalization of Rojtman's result to singular projective varieties with singularities in codimension $\geq 2$,  showing that the Albanese map induces an isomorphism
\[\alpha_X\colon \CH_0(X, X_{sing})_{tors}\xrightarrow{\sim} Alb(X)_{tors}\]
up to $p$-torsion, where $\CH_0(X, X_{sing})$ denotes the Levine-Weibel cohomological Chow group of $X$ relative to $X_{sing}$, and $Alb(X)$ is here Lang's Albanese variety. The group $\CH_0(X, X_{sing})$ is the quotient of the free abelian group on the set of regular closed points of $X$ by the subgroup generated by cycles that are push-forward to $X$ of divisors of rational functions on integral curves missing $X_{sing}$. If the singularities of $X$ are in codimension $1$, the relations among admissible cycles are more complicated, since one has to allow curves $C$ that meet the singular locus nicely (i.e.~defined by a regular sequence at each point of $C\cap X_{sing}$). In this case, the subgroup of relations $R_0(X,X_{sing})$ is generated by cycles of the form $\nu_{C, *} ({\rm div} f)$ for $C$ a good curve relative to $X_{sing}$ and $f$ a rational function on $C$ that is a unit at each point of $C\cap X_{sing}$.

\subsubsection{}For $X=\Spec{A}$ an affine $k$-variety, Levine proved another generalization of Rojtman result in \cite{Levine3} (see also \cite[Section 7]{SrinivasOverview}). 
\begin{thm}[\cite{Levine3}, Theorem 2.6]\label{thm:LevineRoitman} Let $X$ be an affine $k$-variety over an algebraically closed field $k$. Then the cohomological Chow group $\CH_0(X,X_{sing})$ is torsion free, except possibly for $p$-torsion if the characteristic of $k$ is $p>0$.
	\end{thm}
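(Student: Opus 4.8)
The plan is to reduce to the case of a \emph{smooth} affine variety and then to compare $X$ with a projective compactification, where Rojtman's theorem is available. Suppose first that $X$ is smooth affine over $k$ of dimension $d\ge 2$ (the reduction to this case is discussed below). Choose a smooth projective compactification $X\subseteq\ol X$ (Hironaka; in positive characteristic see below) such that $Z:=\ol X\setminus X$, taken with its reduced structure, is the support of an ample effective divisor on $\ol X$; this is possible precisely because $X$ is affine, and then $\dim Z=d-1$ and $\ol X$ is irreducible. The localisation exact sequence for Chow groups gives
\[
\CH_0(Z)\xrightarrow{\ i_*\ }\CH_0(\ol X)\lto\CH_0(X)\lto 0 .
\]
As $k=\ol k$, both $\ol X$ and every component of $Z$ have a $k$-rational point, so the degree splits off a free summand and one gets $\CH_0(X)\simeq M/N$, where $M:=\ker\bigl(\deg\colon\CH_0(\ol X)\to\Z\bigr)$ and $N\subseteq M$ is the image under $i_*$ of the subgroup $\CH_0(Z)_0$ of degree-zero $0$-cycles on $Z$. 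Everything thus reduces to showing that $M/N$ is torsion free away from $p$.

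Two classical inputs do this. \emph{(i)} $M$ is divisible: a difference $[p]-[q]$ of closed points of $\ol X$ lies on an integral curve $C\subseteq\ol X$ through $p$ and $q$, and pulled back to the normalisation $\widetilde C$ it is infinitely divisible in $\Pic^0(\widetilde C)$, the latter being divisible over $k$; now push forward. By Rojtman \cite{Rojtman} and Milne \cite{MilneTorsion} the Albanese map gives an isomorphism $M[n]\xrightarrow{\ \sim\ }\operatorname{Alb}(\ol X)(k)[n]$ for all $n$ prime to $p$, and combined with the divisibility of $M$ this forces $T:=\ker\bigl(M\to\operatorname{Alb}(\ol X)(k)\bigr)$ to be torsion free and divisible away from $p$, i.e.\ a $\Q$-vector space modulo $p$-torsion. \emph{(ii)} Since $Z$ carries an ample divisor and $d\ge 2$, cutting that divisor by $d-2$ general very ample divisors of $\ol X$ yields a curve $C\subseteq Z$, and iterated application of the Lefschetz hyperplane theorem makes $\operatorname{Alb}(\widetilde C)\to\operatorname{Alb}(\ol X)$ surjective; hence $\CH_0(Z)_0\to M\to\operatorname{Alb}(\ol X)(k)$ is surjective, that is, $N+T=M$. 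Consequently $M/N=(N+T)/N\simeq T/(T\cap N)$ is a quotient of the $\Q$-vector space $T$ modulo $p$-torsion, hence torsion free away from $p$. This settles the smooth affine case.

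The main obstacle I anticipate is the reduction of the general (singular) case to the smooth one: this is the technical heart of \cite{Levine3} and \cite{MarcTorsion}. One must understand how the relation subgroup $R_0(X,X_{sing})$ and the torsion $0$-cycles behave under a resolution $\widetilde X\to X$ that is an isomorphism over $X_{reg}$, and — when $X_{sing}$ has codimension $1$ — the argument genuinely needs the notion of \emph{good curve} recalled in the introduction (curves meeting $X_{sing}$ along a regular sequence), not merely curves avoiding $X_{sing}$. A clean alternative is to compactify the singular $X$ directly to a projective $\ol X$ with ample-supported boundary and to run the argument of the first two paragraphs with Levine--Weibel Chow groups throughout, using the extension of Rojtman's theorem to singular projective varieties (\cite{MarcTorsion}; valid when $\ol X_{sing}$ has codimension $\ge 2$, with good-curve corrections otherwise). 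The second delicate point is positive characteristic, where a smooth projective compactification of $X$ may not exist: one then uses a singular $\ol X$ as above, or passes to a smooth alteration $\pi\colon\ol X'\to\ol X$ and transfers the vanishing by the projection formula $\pi_*\pi^*=(\deg\pi)\cdot\mathrm{id}$ — which only controls primes not dividing $\deg\pi$, in particular not $p$ — so the whole argument is run away from $p$, which accounts for the exception for $p$-torsion. Finally, in dimension $\le 1$ the boundary $Z$ is zero-dimensional and step (ii) degenerates, so the one-dimensional case, if it is part of the statement at all, needs a separate (and more delicate) treatment.
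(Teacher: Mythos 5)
There is a genuine gap, and it sits at the decisive step of your argument, not in the reductions you flag as "the main obstacle". In the smooth affine case you arrive at $\CH_0(X)\cong M/N\cong T/(T\cap N)$ with $T=\ker\bigl(M\to \mathrm{Alb}(\ol{X})(k)\bigr)$ torsion free and divisible away from $p$, and you conclude that a quotient of such a group is again torsion free away from $p$. This inference is false: a quotient of a torsion-free divisible group by an arbitrary subgroup can be entirely torsion ($\Q/\Z$ is the standard example). What your steps (i) and (ii) actually give is only that $\CH_0(X)$ is divisible away from $p$, which already follows from the divisibility of $M$ alone. Torsion-freeness of $T/(T\cap N)$ is equivalent to a saturation statement: if $t\in T$ and $nt\in N$, i.e.\ $nt$ is rationally equivalent on $\ol{X}$ to a degree-zero cycle supported on the boundary, then $t$ itself lies in $N$. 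Nothing in Rojtman--Milne, in the divisibility of $M$, or in the Albanese-surjectivity of the boundary yields this; it is precisely the hard content of the theorem, and your outline contains no substitute for it. (A symptom: if (i)+(ii) sufficed, the affine Rojtman theorem would be a half-page corollary of the projective one, which it is not.)

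For comparison, the paper does not reprove Theorem~\ref{thm:LevineRoitman} (it is quoted from \cite{Levine3}), but its main theorem is proved by reproducing Levine's argument in the modulus setting, and that argument supplies exactly the missing step: one shows that prime-to-$p$ torsion classes coming from (relative Picard groups of) good curves die in the Chow group (Proposition~\ref{prop:key-prop}, using the good compactification of Lemma~\ref{lem:good-compactification} with ample boundary, a Bertini reduction to surfaces, pencils and representability of the relative Picard scheme, and the rigidity Theorem~\ref{thm:rigidity}); independence of the chosen curve (Lemma~\ref{lem:independence-curve}) then makes an operator $n^{-1}$ well defined on cycles, it factors through rational equivalence (Lemma~\ref{lem:n-1-factors-through-rat-equiv}), and being inverse to multiplication by $n$ it gives torsion-freeness away from $p$. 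Secondary points in your write-up would also need repair — the Lefschetz/Albanese surjectivity for the fixed, possibly singular boundary divisor should be argued via connectedness of ample divisors on finite \'etale covers rather than the hyperplane-section Lefschetz theorem; the Levine--Weibel group has no localisation sequence, so the "run the same argument for singular $\ol{X}$" alternative is not available as stated; and in characteristic $p$ a smooth projective compactification need not exist — but all of these are minor compared with the unjustified passage from divisibility to torsion-freeness.
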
 
As a consequence, he could prove that the cycle class map
\[\CH_0(X, X_{sing})\to K_0(X)\]
is injective, identifying the image with the subgroup $F_0K_0(X)$ of $K_0(X)$ generated by the classes of smooth points of $X$ (this actually requires quite a bit of extra work, since one has to invoke the theory of Chern classes as developed in the unpublished manuscript \cite{MarcBookUnpublished}). Further generalizations of Theorem \ref{thm:LevineRoitman} are due to Srinivas in \cite[Theorem 1]{MR989909} and more recently to Krishna in \cite[Theorem 1.1]{Krishna:2015aa}.
\subsubsection{} Let now $X$ be a quasi-projective smooth $k$-variety equipped with an effective (and possibly non-reduced) Cartier divisor $D$. Let $\CH_0(X|D)$ be the relative Kerz-Saito Chow group of $0$-cycles on $X$ with modulus $D$. This is defined as quotient of the free abelian group of closed points of $X\setminus D$ modulo the subgroup generated by cycles of the form $\nu_{C,*}({\rm div} f)$, where $C$ is an integral curve in $X$, properly meeting $D$, and $f$ is a rational function on $C$ that is congruent to $1$ modulo the pull back to the normalization of $C$ of the divisor $D$.  When $X$ is a projective curve over a field $k$, the group $\CH_0(X|D)$ is by definition the group of divisors on $X \setminus D$ having a fixed trivialization along $D$ and agrees with the group of $k$-rational points of the Rosenlicht-Serre generalized Jacobian ${\rm Jac}(X|D)$ as defined in \cite{SerreGACC}. 

Starting from the two definitions, it's tempting to compare the cohomological relative Chow group of Levine-Weibel with the Chow group of $0$-cycles with modulus of Kerz-Saito. Replacing $X_{sing}$ with $D$, we see that the two Chow groups are quotients of the same set of generators, subject to relations given by rational functions having a prescribed behavior along the restriction to $D$. One of the goals of this note is to show that many techniques developed by Levine and others for the cohomological Chow group can be extended to the modulus setting. More precisely, we prove the following version of what one can call an affine Rojtman theorem with modulus.
\begin{thm}\label{thm:Main-Theorem-Intro}Let $X$ be a smooth affine $k$-variety of dimension at least $2$, $D$ an effective Cartier divisor on it. Then the Chow group of $0$-cycles on $X$ with modulus $D$, $\CH_0(X|D)$, is torsion free, except possibly for $p$-torsion if the characteristic of $k$ is $p>0$.
    \end{thm}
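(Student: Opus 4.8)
The plan is to transplant Levine's proof of Theorem~\ref{thm:LevineRoitman} to the modulus setting: reduce the torsion computation to curves, and control the contribution of the divisor at infinity of a compactification by a weak Lefschetz argument together with a Rojtman theorem with modulus. Fix a projective compactification $X\hookrightarrow\bar X$; after resolving singularities (available for $\dim\bar X\le 3$; in higher dimension one first cuts $X$ down by generic hyperplane sections or passes to an alteration, neither of which affects prime-to-$p$ torsion) we may take $\bar X$ smooth. Write $\bar D\subset\bar X$ for the closure of $D$ and $\bar X_\infty=\bar X\setminus X$ with its reduced structure; since $X$ is affine, $\bar X_\infty$ is the support of an effective ample divisor. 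First I would set up an excision sequence
\[
\CH_0^{LW}(\bar X_\infty\,|\,\bar D\cap\bar X_\infty)\longrightarrow \CH_0(\bar X\,|\,\bar D)\longrightarrow \CH_0(X|D)\longrightarrow 0,
\]
in which the left-hand term is the Levine--Weibel Chow group of $0$-cycles with modulus on the (singular) boundary, taken relative to $(\bar X_\infty)_{sing}$ as well as to $\bar D\cap\bar X_\infty$; exactness is checked on generators and relations, splitting the divisor of a rational function on a curve in $\bar X$ into its part in $X$ and its part at infinity, exactly as for the classical localization sequence $\CH_0(Z)\to\CH_0(\bar X)\to\CH_0(X)\to 0$. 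Since $k=\bar k$, every closed point has degree one, and one obtains an identification $\CH_0(X|D)\cong\CH_0^0(\bar X\,|\,\bar D)/W$, where $W$ is the image of the degree-zero part of the boundary group.

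It then suffices to establish two facts, which is where Levine's and Rojtman's arguments must be extended to moduli. First, $\CH_0^0(\bar X\,|\,\bar D)$ is divisible by every integer prime to $p$ (and by every integer in characteristic $0$): a degree-zero cycle with modulus $\bar D$ is supported at finitely many points of $\bar X\setminus\bar D$, hence carried by the smooth locus of some integral curve $C\subset\bar X$ meeting $\bar D$ properly (Bertini), and its class is the image of a class in $\CH_0^0(\widetilde C\,|\,\nu^*\bar D)$, the group of $k$-points of the Rosenlicht--Serre generalized Jacobian of the normalization $\widetilde C$ with modulus $\nu^*\bar D$ --- a connected commutative algebraic group over $k$, whose group of $k$-points is therefore divisible by every $\ell$ prime to $p$ (multiplication by $\ell$ is an isogeny on the abelian and toric parts and an automorphism on the unipotent part, which in characteristic $0$ is a $\Q$-vector space), while the pushforward $\CH_0(\widetilde C\,|\,\nu^*\bar D)\to\CH_0(\bar X\,|\,\bar D)$ carries defining relations to defining relations. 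Running the same argument over $\bar X_\infty$, now with Levine's moving lemma for good curves on a singular variety, shows $W$ is likewise divisible away from $p$. Second, $W$ contains all prime-to-$p$ torsion of $\CH_0^0(\bar X\,|\,\bar D)$: comparing both sides with their generalized Albanese varieties with modulus via Albanese maps that are isomorphisms on prime-to-$p$ torsion --- the Rojtman theorem with modulus for the smooth $\bar X$, and its Levine--Weibel analogue over $\bar X_\infty$ in the spirit of \cite{MarcTorsion} --- reduces this to the surjectivity of $\operatorname{Alb}(\bar X_\infty\,|\,\bar D\cap\bar X_\infty)\to\operatorname{Alb}(\bar X\,|\,\bar D)$, which holds by weak Lefschetz because $\bar X_\infty$ supports an ample divisor and $\dim\bar X\ge 2$. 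Granting both, $\CH_0(X|D)=\CH_0^0(\bar X\,|\,\bar D)/W$ is the quotient of a group divisible away from $p$ by a subgroup divisible away from $p$ and containing all the prime-to-$p$ torsion; such a quotient has no $\ell$-torsion for $\ell\ne p$ (if $\ell v\in W$, pick $w\in W$ with $\ell w=\ell v$; then $\ell(v-w)=0$, so $v-w\in W$ and $v\in W$), which is the claim.

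The hardest ingredient is the Rojtman theorem with modulus: both for the smooth $\bar X$ --- where comparing $\CH_0(\bar X\,|\,\bar D)$ with $\operatorname{Alb}(\bar X\,|\,\bar D)$ on prime-to-$p$ torsion asks one to carry the Bloch--Milne machinery of Rojtman's theorem through the modulus condition --- and for the singular boundary $\bar X_\infty$, where one additionally needs Levine's singular Rojtman theorem in its Levine--Weibel-with-modulus form; closely related is the weak Lefschetz comparison of the two generalized Albanese varieties with modulus. By contrast, the divisibility step on the smooth $\bar X$ needs only a Bertini argument producing a curve through a given finite set of points and meeting $\bar D$ properly, not a moving lemma with modulus. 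The singular boundary is what forces every boundary-related step --- the excision sequence, the divisibility of $W$, and the torsion surjectivity --- to be run for Levine--Weibel Chow groups with modulus rather than in the smooth case, and this is precisely the point at which the techniques of \cite{MarcTorsion} and \cite{Levine3} are transplanted to the present setting. Finally, whenever the integral identification $\CH_0(X|D)\cong\ker\!\bigl(\CH_0^{LW}(X\amalg_D X)\to\CH_0(X)\bigr)$ with the double of $X$ along $D$ is available, one may bypass the compactification entirely and apply Theorem~\ref{thm:LevineRoitman} directly to the affine variety $X\amalg_D X$, of which $\CH_0(X|D)$ is then a subgroup.
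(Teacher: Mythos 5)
Your argument has a genuine gap: its key inputs are unproved and are at least as hard as the statement itself. The decisive step --- that the prime-to-$p$ torsion of $\CH_0^0(\Xb|\ol{D})$ is computed by a generalized Albanese with modulus (a ``Rojtman theorem with modulus'' for the projective pair $(\Xb,\ol{D})$), together with a Levine--Weibel-with-modulus Rojtman theorem on the singular boundary $\Xb_\infty$ and a weak Lefschetz statement for the corresponding Albanese varieties with modulus --- is simply assumed; no such theorems are available at this level of generality, and the whole point of the present note is to prove torsion-freeness \emph{without} any Albanese comparison (the route through the double $S(X,D)$ that you mention at the end is the one of \cite{BK}, whose decomposition theorem was only available for surfaces, and it is explicitly not the route taken here). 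In addition, in characteristic $p$ and dimension $>3$ you cannot take $\Xb$ smooth: cutting $X$ by hyperplane sections changes the group (relating the two is precisely a moving problem of the kind Proposition \ref{prop:key-prop} solves), and alterations introduce degree factors, so ``neither affects prime-to-$p$ torsion'' needs an argument it does not get.

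The excision sequence is a second concrete gap. Restriction does give a surjection $\CH_0(\Xb|\ol{D})\to\CH_0(X|D)$, but exactness in the middle requires lifting a modulus relation on an affine curve $C\subset X$ --- where $f\equiv 1$ is imposed only along $\nu^*D$ --- to a relation on its closure $\ol{C}\subset\Xb$, where the condition is imposed along all of $\ol{\nu}^*\ol{D}$, including points of $\ol{C}$ at infinity lying on $\ol{D}$; this fails in general. Controlling exactly this phenomenon is what Lemma \ref{lem:good-compactification} (the condition $\ol{C}\cap\ol{D}=C\cap D$) is for, but such compactifications are chosen curve by curve, not once and for all, so they do not produce your sequence. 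The paper's actual proof avoids both problems: it reduces by Bertini and a good compactification to a Cohen--Macaulay surface, kills torsion classes of $\Pic(C,\nu^*D)$ in $\CH_0(X|D)$ by a pencil argument using representability of the relative Picard scheme, divisibility of generalized Jacobians and the rigidity theorem \ref{thm:rigidity}, proves independence of the auxiliary curve, and thereby constructs a well-defined homomorphism $n_X^{-1}$ on $\CH_0(X|D)$ inverse to multiplication by $n$, which gives torsion-freeness away from $p$ directly.
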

Note that there is a well defined relative cycle class map (for $X$ not necessarily affine, see for example \cite[Theorem 11.6]{BK}, but also \cite[Theorem 4.4.10]{BThesis} for a different approach)
\[cl_{X,D}\colon \CH_0(X|D)\to K_0(X,D),\]
where $K_0(X,D)$ is the relative $K_0$-group of the pair $(X,D)$, that is a particular instance of a more general construction. 
However, without a good theory of relative Chern classes (or Chern classes with modulus) at hand at the moment, we can't deduce directly from our Theorem \ref{thm:Main-Theorem-Intro} the injectivity of the cycle class map $cl_{X,D}$.

In \cite{BK}, the injectivity of $cl_{X,D}$ for $X$ a smooth quasi-projective surface is obtained  as byproduct of a factorization result for the Levine-Weibel Chow group of 0-cycles of the ``double variety'' $S(X,D)$, obtained by glueing two copies of $X$ along the given divisor $D$ (see \cite{BK}, Theorem 1.8). The double construction, together with results of Krishna and Levine on torsion cycles on singular affine varieties recalled above, allows one to prove a stronger version of our Theorem \ref{thm:Main-Theorem-Intro}, encompassing $p$-torsion as well (this is \cite{BK}, Theorem 5.12). 

In this note we use a different (and more direct) argument, strongly inspired by the original works of Levine \cite{Levine3}, and that does not rely on the decomposition Theorem proved in \cite{BK}. In fact, in the proofs we essentially reproduce and adapt Levine's arguments to our context.
This is a continuation of our study of torsion $0$-cycles in \cite{tor-div-rec}.
\section{Zero cycles with modulus}

\subsubsection{}\label{def:Setting-def-Chow-Mod}We briefly recall the definition of the Kerz-Saito Chow group of $0$-cycles with modulus (see \cite{KS1}).
For an integral scheme $\overline{C}$ over $k$ and for $E$ a closed subscheme of $\overline{C}$, we set
\begin{align*}
G(\ol{C},E)
&=\bigcap_{x\in E}\mathrm{Ker}\bigl(\cO_{\overline{C},x}^{\times}\to \cO_{E,x}^{\times} \bigr) \\&= \varinjlim_{E\subset U\subset \overline{C}}\Gamma(U, \ker(\cO_{\overline{C}}^\times \to \cO_{E}^\times)),
\end{align*}
where  $U$ runs over the set of open subsets of $\ol{C}$ containing $E$ (the intersection taking place in the function field $k(\ol{C})^\times$). We say that a rational function $f\in G(\ol{C},E)$ satisfies the modulus condition with respect to $E$.

Let $X$ be a scheme of finite type over $k$, and let $D$ be an effective Cartier divisor on ${X}$. Write $U$ for the complement $X\setminus{D}$ and $Z_0(U)$ for the free abelian group on the set of closed points of $U$. Let $\ol{C}$ be an integral normal curve over $k$ and
let $\varphi_{\ol{C}}\colon \ol{C}\to {X}$ be a finite morphism such that $\varphi_{\ol{C}}(\ol{C})\not \subset D$.   The push forward of cycles along the restriction of $\varphi_{\ol{C}}$ to $C$ gives a well defined group homomorphism
\[
\tau_{\ol{C}}\colon G(\ol{C},\varphi_{\ol{C}}^*(D))\to Z_0(U),
\]
sending a function $f$ to the push forward of the divisor ${\rm div}_{\ol{C}}(f)$.
\begin{df}\label{def:DefChowMod-Definition}

We define the Chow group $\CH_0(X|D)$ of 0-cycles of $X$ with modulus $D$ as the cokernel of the homomorphism 
\[
\tau\colon\bigoplus_{\varphi_{\ol{C}}\colon \ol{C}\to X}G(\ol{C},\varphi_{\ol{C}}^*(D)) \to Z_0(U).
\]
where the sum runs over the set of finite morphisms $\varphi_{\ol{C}}\colon \ol{C}\to X$ with $\ol{C}$ normal and such that $\varphi_{\ol{C}}(\ol{C})\not \subset D$.

\end{df}

\subsection{Relative Picard and cycles with modulus}\label{sec:Picard}
We recall here some basic results about the relative Picard group that we will use later in the text (see \cite[\S 2]{SV}). Let $\overline{X}$ be a scheme and let $Y$ be a closed subscheme of $X$. Let $X$ be the open complement $\ol{X}\setminus Y$ and write $i\colon Y\to \Xb$ for the closed embedding. The relative Picard group $\Pic(\ol{X},Y)$ is the group of isomorphism classes of pairs of the form $(\mathcal{L}, \sigma)$, where $\mathcal{L}$ is a line bundle on $\Xb$ and $\sigma$ is a trivialization of $\mathcal{L}$ along $Y$, i.e.~the datum of an isomorphism $\sigma\colon \mathcal{L}_{|Y}\xrightarrow{\simeq} \cO_{Y}$, with multiplication given by the tensor product of line bundles.

There is an exact sequence
\begin{equation}\label{eq:fundsequencePic}\Gamma(\Xb, \cO^\times)\to \Gamma(Y, \cO^\times)  \to \Pic(\Xb, Y) \to \Pic(\Xb)\to \Pic(Y)\end{equation}
and we can identify $\Pic(\Xb, Y)$ with the cohomology group $\H^1_{\rm Zar}(\Xb, \cO^\times_{\Xb|Y})$, where $\cO^\times_{\Xb|Y}$ denotes the kernel of the natural surjection of sheaves of units $\cO_{\Xb}^\times\to i_* \cO_Y^\times$. 

\subsubsection{}\label{sec:Miscellanea-Pic} Assume that $\Xb$ is integral and Noetherian and that $Y$ has an affine open neighborhood in $\Xb$ (this happens e.g.~when $\Xb$ is a curve). Let ${\rm Div}(\Xb, Y)$ be the group of Cartier divisors on $\Xb$ whose support does not intersect with $Y$. Note that if $X$ is regular, then ${\rm Div}(\Xb, Y)$ is a free abelian group generated by irreducible divisors $T\subset X$ which are closed in $\Xb$ (this is \cite[Lemma 2.4]{SV}). By \cite[2.3]{SV}, one has the following exact sequence
\begin{equation}\label{eq:sequence-Pic-Cartier} 0\to \Gamma(\Xb,\cO^\times_{\Xb,Y}) \to G(\Xb, Y) \to {\rm Div}(\Xb, Y) \to \Pic(\Xb, Y) \to 0.\end{equation}
If $Y$ does not admit an affine open neighborhood, the last map in the displayed sequence is no longer surjective: its image agrees with the subgroup of $\Pic(\Xb, Y)$ consisting of liftable elements. See \cite[3.2]{KSY}.

We need the following auxiliary Lemma   to relate the relative Picard group with the Chow group with modulus (in the case of curves).
\begin{lem}\label{lem:lemmautile}Let $f\colon \ol{C}'\to \ol{C}$ be a proper surjective morphism of integral $k$-schemes of finite type and let $E\subset \ol{C}$ be an effective Cartier divisor. Then, for $E'=f^* E$, we have an injection
	\[f^*\colon G(\ol{C}, E)\hookrightarrow G(\ol{C}', E').\]
	\begin{proof} By \cite[Lemma 2.7.2]{KSY}, the system $\{ f^{-1}(U) \,|\, U\supset E, U \, {\rm open}\}$ is cofinal among the open neighborhoods of $f^{-1}(E)$ in $\ol{C}'$. Thus, it's enough to show that the natural map $\cO_{U|E}^\times\to f_* \cO^\times_{f^{-1}(U)|E'}$ is injective, for $U$ running on the set of open neighborhoods of $E$ in $\ol{C}$. Since $f$ is surjective, we have $\cO_{\ol{C}}^\times\hookrightarrow f_* \cO_{\ol{C}'}^\times$, whence the claim follows.
		\end{proof}
	\end{lem}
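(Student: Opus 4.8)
The plan is to reduce the assertion to two independent points: that $f^*$ really does map $G(\ol C, E)$ into $G(\ol C', E')$ (well-definedness), and that it is injective. The second point is essentially free: since $\ol C'$ and $\ol C$ are integral and $f$ is surjective, $f$ is dominant, so the induced map on function fields is an injection of fields $k(\ol C)\hookrightarrow k(\ol C')$; as $G(\ol C, E)$ is by definition a subgroup of $k(\ol C)^\times$ (the intersection of the local kernels being taken inside $k(\ol C)^\times$), the restriction of this injection to $G(\ol C,E)$ is automatically injective, once we know it lands in $G(\ol C',E')$. So the whole content is well-definedness.

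For that I would first note that $E'=f^*E$ is a bona fide effective Cartier divisor: $f$ is surjective and $\mathrm{supp}(E)$ is nowhere dense in the integral scheme $\ol C$, so $f(\ol C')\not\subset\mathrm{supp}(E)$ and the pullback is defined, with $\mathrm{supp}(E')=f^{-1}(\mathrm{supp}(E))$ and $I_{E,x}\,\cO_{\ol C',y}=I_{E',y}$ for $y$ over $x$ — this last identity being exactly the definition of $f^*E$. Then I would check the modulus condition pointwise: for $u\in G(\ol C,E)$ and $y\in\mathrm{supp}(E')$ with image $x=f(y)$, the element $u\in\cO_{\ol C,x}^\times$ with $u-1\in I_{E,x}$ maps in $\cO_{\ol C',y}$ to a unit with $u-1\in I_{E,x}\cO_{\ol C',y}=I_{E',y}$, i.e.\ $f^*u\in\ker(\cO_{\ol C',y}^\times\to\cO_{E',y}^\times)$. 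As $y$ was arbitrary, $f^*u\in G(\ol C',E')$.

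Alternatively — and this is the route that generalizes most cleanly, and that I would expect the paper to take — one can argue at the level of sheaves, using the colimit presentation $G(\ol C,E)=\varinjlim_{U\supset E}\Gamma(U,\cO^\times_{\ol C|E})$ and its analogue for $\ol C'$. The point is that the family $\{f^{-1}(U)\}$, as $U$ ranges over open neighborhoods of $E$, is cofinal among the open neighborhoods of $f^{-1}(E)$ in $\ol C'$ — this is where properness of $f$ enters (cf.\ \cite[Lemma 2.7.2]{KSY}) — so that $G(\ol C',E')=\varinjlim_U\Gamma(U,f_*\cO^\times_{f^{-1}(U)|E'})$ and $f^*$ is computed termwise; one then only needs the sheaf map $\cO^\times_{\ol C|E}\to f_*\cO^\times_{f^{-1}(U)|E'}$ to be injective, which follows from $\cO_{\ol C}^\times\hookrightarrow f_*\cO_{\ol C'}^\times$ (again surjectivity of $f$). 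Either way, the only step that is not purely formal is the cofinality of preimages; everything else is bookkeeping with the definitions of the modulus condition and of $f^*E$, so I do not expect a real obstacle here.
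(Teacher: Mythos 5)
Your proposal is correct, and in fact it contains two valid proofs. Your ``alternative'' sheaf-level route is exactly the paper's argument: invoke the cofinality of $\{f^{-1}(U)\}_{U\supset E}$ among open neighborhoods of $f^{-1}(E)$ (this is where properness enters, via \cite[Lemma 2.7.2]{KSY}), compute $G(\ol{C}',E')$ as the colimit over these preimages, and deduce injectivity from $\cO_{\ol{C}}^\times\hookrightarrow f_*\cO_{\ol{C}'}^\times$, which holds because $f$ is surjective. Your primary route is genuinely different and more elementary: you verify well-definedness stalkwise, using that $E'=f^*E$ means $I_{E,x}\cO_{\ol{C}',y}=I_{E',y}$ and that the local homomorphisms $\cO_{\ol{C},x}\to\cO_{\ol{C}',y}$ carry units to units and congruences modulo $I_{E,x}$ to congruences modulo $I_{E',y}$, and you get injectivity for free from the inclusion of function fields $k(\ol{C})\hookrightarrow k(\ol{C}')$ (dominance of $f$), since $G(\ol{C},E)$ sits inside $k(\ol{C})^\times$ by definition. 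Note that this pointwise argument never uses properness of $f$ --- only that $f^*E$ is defined and that $f$ is dominant --- so it proves a slightly more general statement; what the paper's cofinality-based argument buys instead is a statement at the level of the sheaves $\cO^\times_{\ol{C}|E}\to f_*\cO^\times_{\ol{C}'|E'}$ and their sections over neighborhoods, which matches the colimit description used later in \ref{sec:construction-Pic-Chow}. Either way the substance is the same, and there is no gap in what you wrote.
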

\subsubsection{}\label{sec:construction-Pic-Chow}
Let $\ol{C}$ be an integral curve over $k$ and let $E$ be an effective Cartier divisor on $\ol{C}$. Write $C = \ol{C}\setminus E$. Then we have a map ${\rm Div}(\ol{C}, E) \to Z_0(C)$, obtained by sending a Cartier divisor  to its associated Weil divisor (whose support is, by definition, disjoint from $E$). Let $\varphi\colon \ol{C}^N\to \ol{C}$ be the normalization morphism. According to Definition \ref{def:DefChowMod-Definition}, we have the following presentation of the Chow group of $0$-cycles with modulus on $\ol{C}$
\[ G(\ol{C}^N, \varphi^* E) \xrightarrow{\varphi_*} Z_0(C)\to \CH_0(\ol{C}|E)\to 0.\]
Combining Lemma \ref{lem:lemmautile} (for $f=\varphi$) and \eqref{eq:sequence-Pic-Cartier} we obtain then a natural map ${\rm Div}\colon \Pic(\ol{C}, E) \to \CH_0(\ol{C}|E)$ which is, in general, not surjective nor injective. If $\ol{C}$ happens to be normal, it is classically known that this map is an isomorphism (see e.g.~\cite{SerreGACC}): if $\ol{C}$ is moreover projective, we can identify the degree-$0$-part of $\CH_0(\ol{C}|E)$ with the group of $k$-rational points of ${\rm Jac}(\ol{C}|E)$, the (connected component of the identity of the) Rosenlicht-Serre generalized Jacobian of the pair $(\ol{C}, E)$ (see also \cite[9.4.1]{KSY}).  

If $C$ is normal (i.e.~$\ol{C}_{\rm sing} \subset |E|$), then \cite[Lemma 2.4]{SV} recalled above in \ref{sec:Miscellanea-Pic} implies that the map ${\rm Div}$ is  surjective. In this case, we immediately see that $\CH_0(\ol{C}|E) = \Pic(\ol{C}^N, \varphi^* E)$ and that we are describing nothing but the canonical surjection
\[\Pic(\ol{C}, E)\to \Pic(\ol{C}^N, \varphi^* E).\]

Its kernel is isomorphic to the quotient $K_{\ol{C}|E} := G(\ol{C}^N, \varphi^* E)/G(\ol{C}, E)$. Note that we have an exact sequence of sheaves on $\ol{C}$
\[ 0\to \cO^\times_{\ol{C}|E} \to \varphi_* \cO^\times_{\ol{C}^N| \varphi^* E} \to \mathcal{S}\to 0 \]
where $S$ is a skyscraper sheaf, supported on $|E|\cap \ol{C}_{\rm sing}$, and $K_{\ol{C}|E} = \varinjlim_{U\supset |E| \cap \ol{C}_{\rm sing}}\Gamma(U,S) = \Gamma(\ol{C}, S)$. 

In order to compute  $\Gamma(\ol{C}, S)$ we can assume that $\ol{C}$ is projective (if not, compactify $\ol{C}\subset \ol{C}'$ so that $\ol{C}'_{\rm sing} = \ol{C}_{\rm sing}$). Factor the map $\varphi\colon \ol{C}^N\to \ol{C}$ as $\pi_1\colon \ol{C}^N\to Y$ followed by $\pi_2\colon Y\to \ol{C}$, where $Y$ is the reduced and projective curve with ordinary  singularities associated to $\ol{C}$. Write $T$ for the kernel of $\Pic(Y, \pi_1^* E) \to \Pic(\ol{C}^N, \varphi^*E)$ and $L$ for the kernel of $\Pic(\ol{C}, E)\to \Pic(Y, \pi_1^* E)$. Then we have a canonical exact sequence
\[ 0\to L\to  K_{\ol{C}|E}\to T\to 0.\]
If we now suppose now that $k$ is algebraically closed, and that $n$ is an integer prime to the characteristic of $k$, we can follow the argument of \cite[7.5.18]{Liu} to see that both $L$ and $T$ are $n$-divisible (and so, $K_{\ol{C}|E}$ is $n$-divisible as well). 

\subsubsection{}\label{sec:map-Pic-Chow-X}Let $X$ be a scheme of finite type over $k$  and $D$  an effective Cartier divisor on it. Let $\nu\colon C\hookrightarrow X$ be an integral curve, properly intersecting $D$ (i.e.~such that $\nu(C)\not\subset D$). The divisor $D$ restricts then to an effective divisor $\nu^* D$ on $C$, and we have a well-defined push forward map
\[\nu_*\colon \CH_0(C|\nu^*( D)) \to \CH_0(X|D).\]
By composing it with the canonical map constructed in \ref{sec:construction-Pic-Chow}, we obtain a map, still denoted $\nu_*$, $\Pic(C, \nu^*( D))\to \CH_0(X|D)$, that we will use repeatedly in the text. 

 \subsection{The rigidity theorem}

Let ${X}$ be a quasi-projective variety  over an algebraically closed field $k$ of exponential characteristic $p\geq1$. Let $D$ be an effective Cartier divisor on $X$. Write $U$ for the open complement $X\setminus D$. Let $C$ be a smooth curve over $k$ and let $W$ be a $1$-cycle on $C\times X$ that is flat over $C$ and such that $|W|\subset C\times U$. Let $x$ be a closed point in $C$. Since $W$ is flat over $C$, $\dim(W\cap (x\times U)) = 0$, so that $W$ and $x\times U$ are in good position. We denote by $W(x)$ the cycle
\[W(x) = p_{2, *} (W \cdot  ( \{x\}\times U)).\]
It is a $0$-cycle on $X$, supported outside $D$. Write $[W(x)]$ for its class in $\CH_0(X|D)$. The following result improves \cite[Theorem 2.13]{tor-div-rec}, using the same argument of \cite[Proposition 4.1]{LW}.

\begin{thm}\label{thm:rigidity}
In the above notations, let $n$ be an integer prime to $p$.
  Assume that there exists a dense open subset $C_0$ of $C$ such that for every $x\in C_0(k)$ one has
	\[n\cdot [W(x)] =0 \quad \text{ in } \CH_0(X|D) \]
	Then the function $x\in C(k)\mapsto [W(x)]$ is constant.
\end{thm}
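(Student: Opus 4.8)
The plan is to follow the classical affine rigidity argument of \cite[Proposition~4.1]{LW}: realise the function $x\mapsto[W(x)]$ as a group homomorphism out of the $k$-points of a generalized Jacobian of the base curve, and exploit that this group is $n$-divisible because $k$ is algebraically closed and $n$ is prime to $p$. We may assume $C$ connected, hence irreducible, and (after the standard flatness reduction) that $W=\sum_i n_i[V_i]$ with each $V_i$ finite over $C$. Let $\ol{C}$ be the smooth projective model of $C$, so $C=\ol{C}\setminus S$ with $S$ a finite set of closed points, and choose a projective compactification $\ol{X}\supseteq X$ together with an effective Cartier divisor $\ol{D}$ on $\ol{X}$ with $\ol{D}|_{X}=D$. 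Let $\ol{W}$ be the closure of $W$ in $\ol{C}\times\ol{X}$ and $\widetilde{W}\to\ol{W}$ its normalisation (taken componentwise): a smooth projective curve carrying finite morphisms $f\colon\widetilde{W}\to\ol{C}$ and $g\colon\widetilde{W}\to\ol{X}$. Set $B=f^{-1}(S)$ and $E=g^{*}\ol{D}$. Because $|W|\subset C\times U$, the closure $\ol{W}$ meets $\ol{C}\times|\ol{D}|$ only over $S$, whence $|E|\subseteq B$; moreover $\widetilde{W}^{\circ}:=g^{-1}(X)$ is a normal curve containing $\widetilde{W}\setminus B$, is finite over $X$ (being the base change of $g$ along $X\hookrightarrow\ol{X}$), has image not contained in $D$, and satisfies $(g|_{\widetilde{W}^{\circ}})^{*}D=E|_{\widetilde{W}^{\circ}}$.

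The next step is the factorisation. For $x\in C(k)$ one has $W(x)=g_{*}(f^{*}[x])$, the divisor $f^{*}[x]$ being supported in $\widetilde{W}\setminus B$. Fix an integer $m$ larger than every multiplicity occurring in $E$. By Lemma~\ref{lem:lemmautile} applied to $f$ and the divisor $mS$ on $\ol{C}$ we obtain $f^{*}G(\ol{C},mS)\subseteq G(\widetilde{W},f^{*}(mS))\subseteq G(\widetilde{W},E)$. Now for $h\in G(\ol{C},mS)$ the divisor ${\rm div}_{\ol{C}}(h)$ is supported in $C$, so ${\rm div}_{\widetilde{W}}(f^{*}h)=f^{*}{\rm div}_{\ol{C}}(h)$ is supported in $\widetilde{W}\setminus B\subseteq\widetilde{W}^{\circ}$ and $f^{*}h|_{\widetilde{W}^{\circ}}\in G(\widetilde{W}^{\circ},(g|_{\widetilde{W}^{\circ}})^{*}D)$; hence $g_{*}({\rm div}_{\widetilde{W}}(f^{*}h))$ is, by Definition~\ref{def:DefChowMod-Definition} applied to the finite curve $g|_{\widetilde{W}^{\circ}}\colon\widetilde{W}^{\circ}\to X$, a relation in $\CH_0(X|D)$. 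Consequently the homomorphism $Z_0(C)\to\CH_0(X|D)$, $[x]\mapsto[W(x)]$, annihilates all such ${\rm div}_{\ol{C}}(h)$ and therefore, by the description of $\CH_0(\ol{C}|mS)$ through $\Pic(\ol{C},mS)$ in \ref{sec:construction-Pic-Chow} and \eqref{eq:sequence-Pic-Cartier}, factors through $\CH_0(\ol{C}|mS)$; passing to degree-zero parts gives
\[
\Theta\colon{\rm Jac}(\ol{C}|mS)(k)\longrightarrow\CH_0(X|D),\qquad\Theta\bigl({\rm cl}(x_0-x_1)\bigr)=[W(x_0)]-[W(x_1)]\quad(x_0,x_1\in C(k)).
\]

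To conclude, I would combine two facts about ${\rm Jac}(\ol{C}|mS)(k)$. First, since $k=\ol{k}$ and $n$ is prime to $p$, this group is $n$-divisible: it is an extension of the divisible group ${\rm Jac}(\ol{C})(k)$ by the $k$-points of a commutative affine algebraic group whose torus part is divisible and whose unipotent part is $n$-divisible for $n$ prime to $p$ (as in the argument following \cite[7.5.18]{Liu} recalled in \ref{sec:construction-Pic-Chow}). Second, ${\rm Jac}(\ol{C}|mS)(k)$ is generated by the classes ${\rm cl}(x_0-x_1)$ with $x_0,x_1\in C_0(k)$: for $N\gg0$ the Abel--Jacobi map from effective divisors of degree $N$ on $\ol{C}$ supported in $C$ is surjective onto the relevant component, a general such divisor may be taken with support in the dense open $C_0$, so the image of $C_0^{N}$ contains a dense constructible subset of ${\rm Jac}(\ol{C}|mS)(k)$ and hence generates the group. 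By the hypothesis $\Theta$ carries these generators into ${}_{n}\CH_0(X|D)$, so ${\rm im}(\Theta)$, being generated by $n$-torsion elements, is $n$-torsion; being the image of an $n$-divisible group and contained in the $n$-torsion, it is zero. Thus $[W(x_0)]=[W(x_1)]$ for all $x_0,x_1\in C(k)$, which is the assertion.

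The step I expect to be the main obstacle is the construction of $\Theta$ in the second paragraph --- that is, exhibiting $[W(x)]$ as a \emph{finite} push-forward landing in $\CH_0(X|D)$ itself and not merely in $\CH_0(\ol{X}|\ol{D})$. This is what forces one to pass to a compactification, to isolate $\widetilde{W}^{\circ}=g^{-1}(X)$, and above all to choose $m$ large enough (via Lemma~\ref{lem:lemmautile}) that the modulus condition is preserved under $f^{*}$, so that $f^{*}h|_{\widetilde{W}^{\circ}}$ is an admissible function for the relations of $\CH_0(X|D)$. The two remaining points requiring care --- but presenting no real difficulty once $\Theta$ is available --- are the initial reduction to a finite flat family $W\to C$ (so that $\ol{W}$ is finite over $\ol{C}$ and no component degenerates over a point of $C$; allowing such components would only enlarge the modulus on $\ol{C}$ to include finitely many interior points, without affecting divisibility) and the passage from $C_0(k)$ to all of $C(k)$, which is automatic from the factorisation through ${\rm Jac}(\ol{C}|mS)$.
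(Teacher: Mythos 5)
Your route is the intended one: the paper gives no written proof of Theorem \ref{thm:rigidity} and simply invokes the argument of \cite[Proposition 4.1]{LW}, i.e.\ precisely your factorization of $x\mapsto [W(x)]$ through a generalized Jacobian with modulus supported on $\ol{C}\setminus C$, followed by prime-to-$p$ divisibility and generation by points of the dense open $C_0$. The gap is in the justification of the factorization. A component $V_i$ of $W$ is flat over $C$ but need not be finite over $C$ (this is not a ``standard reduction''), so its closure in $\ol{C}\times\ol{X}$ can acquire points lying over \emph{interior} points $x\in C$ whose second coordinate lies in $\ol{X}\setminus X$, possibly in $\ol{D}\setminus D$. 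Consequently the assertions ``$\ol{W}$ meets $\ol{C}\times|\ol{D}|$ only over $S$'', hence ``$|E|\subseteq B$'' and ``$\widetilde{W}^{\circ}\supseteq\widetilde{W}\setminus B$'', are false in general: $G(\widetilde{W},f^{*}(mS))\not\subseteq G(\widetilde{W},E)$, and $g_{*}(f^{*}[x])$ differs from $W(x)$ by a cycle supported on $\ol{X}\setminus X$. Your fallback --- enlarging the modulus on $\ol{C}$ by the finitely many bad interior points --- does not rescue the statement: it only gives constancy of $x\mapsto[W(x)]$ away from those points, whereas the theorem asserts constancy on all of $C(k)$, and it is exactly the degenerate points that matter in the application (in Proposition \ref{prop:key-prop} one evaluates at $\beta_\infty$, over which the whole fiber escapes to $\ol{X}\setminus X$).

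The repair stays inside your construction, but everything must be done on $\widetilde{W}^{\circ}=g^{-1}(X)$ from the start (after discarding the constant components $C\times\{u\}$ of $W$, on which $g$ is not finite and which contribute a constant class). Since $W$ is closed in $C\times X$ with $|W|\subset C\times U$, a point $w\in\widetilde{W}^{\circ}$ with $g(w)\in D$ cannot lie over $C$ (its image in $\ol{W}$ would be a point of $|W|$ with second coordinate in $D$); hence $|(g|_{\widetilde{W}^{\circ}})^{*}D|\subseteq B\cap\widetilde{W}^{\circ}$, and for $m$ larger than the multiplicities of $E$ one gets, via Lemma \ref{lem:lemmautile}, that $f^{*}h|_{\widetilde{W}^{\circ}}\in G(\widetilde{W}^{\circ},(g|_{\widetilde{W}^{\circ}})^{*}D)$ for every $h\in G(\ol{C},mS)$ --- no interior modulus points are needed. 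Likewise, for \emph{every} $x\in C(k)$, all points of $\widetilde{W}$ above $|W|\cap(\{x\}\times U)$ lie in $\widetilde{W}^{\circ}$, so $(f^{*}[x])|_{\widetilde{W}^{\circ}}$ pushes forward to exactly $W(x)$, and $g_{*}\bigl(\mathrm{div}_{\widetilde{W}^{\circ}}(f^{*}h|_{\widetilde{W}^{\circ}})\bigr)=\sum_{x}\mathrm{ord}_{x}(h)\,W(x)$ is a relation in $\CH_0(X|D)$ coming from the finite map $g|_{\widetilde{W}^{\circ}}\colon\widetilde{W}^{\circ}\to X$ of Definition \ref{def:DefChowMod-Definition}. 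With this, your homomorphism $\Theta$ is defined on $\mathrm{Jac}(\ol{C}|mS)(k)$ and computes $[W(x_0)]-[W(x_1)]$ for all $x_0,x_1\in C(k)$, and your divisibility-plus-generation endgame then goes through as written.
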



\section{Torsion cycles on affine varieties}
\subsection{An easy moving} Throughout this section, we fix an algebraically closed field $k$. We discuss a result that concerns the image of torsion cycles from curves to affine varieties with modulus. As the reader will soon notice, we owe a great intellectual debt to \cite{Levine3}. 
We start by recalling the following result (see \cite{Levine3}, Corollary 1.2).
\begin{lem}\label{lem:good-compactification} Let $X$ be a quasi-projective variety, smooth over $k$. Let $D$ be an effective Cartier divisor on $X$ and let $\nu\colon C\hookrightarrow X$ be a reduced curve in $X$, properly intersecting $D$. Then there exists a projective closure $\Xb$ of $X$ such that, if $\ol{C}$ (resp.~$\ol{D}$) denotes the closure of $C$ (resp.~$D$) in $\Xb$,  then the following hold:
\begin{enumerate}
\item $\ol{D}\cap \ol{C} = C\cap D$,
\item $\Xb \setminus \ol{D}$ is normal. In particular, the singular locus $\Xb_{sing}$ ($\subset \Xb - X$ ) has codimension at least $2$ on $\Xb$ at each point of $\Xb_{sing}\cap (\ol{C} \setminus C)$. 
\end{enumerate}
\end{lem}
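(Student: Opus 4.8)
The plan is to build $\Xb$ from an arbitrary projective closure in two independent moves: a finite sequence of blow-ups with centres on the boundary to force~(1), followed by a single normalisation to force~(2). First I would fix any projective closure $X\subseteq\Xb_0$; any blow-up of $\Xb_0$ with centre contained in $\Xb_0\setminus X$ is an isomorphism over $X$, and for such a modification $\Xb'\to\Xb_0$ the closure of $C$ (resp.~$D$) in $\Xb'$ is the strict transform of $\ol{C}_0$ (resp.~$\ol{D}_0$), both being the scheme-theoretic closure of the same subscheme of $X$. I would also record that the normalisation $\ol{C}^N$ of $\ol{C}_0$ is the normalisation of every such strict transform of $\ol{C}_0$ and carries a canonical morphism to $\Xb'$ (by the universal property of blowing up, the pullback of the centre ideal to the regular curve $\ol{C}^N$ being invertible).

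\emph{Forcing~(1).} The ``bad locus'' $B_0=(\ol{C}_0\cap\ol{D}_0)\setminus(C\cap D)$ is closed in the curve $\ol{C}_0$, hence finite, and disjoint from $X$ because $\ol{C}_0\cap\ol{D}_0\cap X=C\cap D$; so $B_0\subseteq\Xb_0\setminus X$, and each of its points lies on $\ol{D}_0$. Since $C$ meets $D$ properly, no component of $\ol{C}_0$ lies in $\ol{D}_0$, so the pullback $\Delta_0$ of $\cI_{\ol{D}_0}$ to $\ol{C}^N$ is an effective Cartier divisor; let $m_0$ be the sum of its multiplicities at the points of $\ol{C}^N$ over $B_0$, a finite non-negative integer. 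If $m_0=0$ then $B_0=\emptyset$ and $\Xb_0$ already works. Otherwise I would blow up the finite set $B_0$, obtaining $\pi\colon\Xb_1=\mathrm{Bl}_{B_0}\Xb_0\to\Xb_0$ with exceptional Cartier divisor $E$ (cut out by $\cI_{B_0}\cO_{\Xb_1}$), and pass to the closures $\ol{C}_1,\ol{D}_1$ of $C,D$ in $\Xb_1$. The new bad locus lies in the finite set $\ol{C}_1\cap\ol{D}_1\cap E$, which sits over $B_0$. The key point is that the corresponding invariant $m_1$ satisfies $m_1\leq m_0-1$: since $B_0\subseteq\ol{D}_0$ one has $\cI_{\ol{D}_0}\cO_{\Xb_1}\subseteq\cI_{B_0}\cO_{\Xb_1}=\cI_E$, hence $\cI_{\ol{D}_0}\cO_{\Xb_1}=\cI_E\cdot\cJ$ with $\cJ=\cI_{\ol{D}_0}\cO_{\Xb_1}:\cI_E$, and $\cJ\subseteq\cI_{\ol{D}_1}$ because $\cI_{\ol{D}_1}$ is the $E$-saturation of $\cI_{\ol{D}_0}\cO_{\Xb_1}$; pulling back to $\ol{C}^N$ and using that the pullback of $\cI_E$ has multiplicity $\geq1$ at every point over $B_0$, one finds that the multiplicity of the pullback $\Delta_1$ of $\cI_{\ol{D}_1}$ drops by at least $1$ at each such point. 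Iterating, after finitely many blow-ups the invariant reaches $0$, and we obtain a projective closure $\Xb'\supseteq X$, isomorphic to $\Xb_0$ over $X$, satisfying~(1).

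\emph{Forcing~(2), and conclusion.} Let $\Xb$ be the normalisation of $\Xb'$; it is again projective, and an isomorphism over $X$ because $X$ is smooth hence normal. Then~(1) is inherited: writing $\ol{C},\ol{D}$ for the closures of $C,D$ in $\Xb$, the finite map $\Xb\to\Xb'$ sends $\ol{C}\cap\ol{D}$ into $\ol{C}'\cap\ol{D}'=C\cap D\subseteq X$, is an isomorphism over $X$, and $C\cap D\subseteq\ol{C}\cap\ol{D}$ is clear, so $\ol{C}\cap\ol{D}=C\cap D$. Now $\Xb\setminus\ol{D}$ is open in the normal scheme $\Xb$, hence normal, hence regular in codimension $1$; so its singular locus $\Xb_{sing}\setminus\ol{D}$ has codimension $\geq2$ in $\Xb$. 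Finally $\Xb_{sing}\subseteq\Xb\setminus X$ since $X$ is smooth, and by~(1) every point of $\ol{C}\setminus C$ lies in $\Xb\setminus\ol{D}$; so at each point of $\Xb_{sing}\cap(\ol{C}\setminus C)$ the singular locus of $\Xb$ has codimension $\geq2$, which is~(2).

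The step I expect to be the main obstacle is the first move, and specifically the ``multiplicity drops'' claim: one must be careful because $\Xb_0$ may be singular and $\ol{D}_0$ need not be Cartier along $B_0$, so the projection formula on $\Xb_0$ is unavailable. The remedy, as indicated, is to do all the bookkeeping on the \emph{regular} curve $\ol{C}^N$ and to extract the drop from the ideal identity $\cI_{\ol{D}_0}\cO_{\Xb_1}=\cI_E\cdot\cJ$ with $\cJ\subseteq\cI_{\ol{D}_1}$. Granting this and the resulting termination, condition~(2) is then purely formal, being a consequence of the fact that a normal scheme is regular in codimension one. This adapts the argument of Levine \cite{Levine3}.
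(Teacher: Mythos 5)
Your construction is correct, and all the delicate points check out: the closures of $C$ and $D$ in a boundary blow-up are indeed the strict transforms, the containment $\cJ\subseteq\cI_{\ol{D}_1}$ follows because the strict-transform ideal is the $E$-saturation of the total transform, and doing the multiplicity bookkeeping on the fixed regular curve $\ol{C}^N$ (which lifts to every $\Xb_k$ by the universal property of blowing up) makes the drop-by-one and termination arguments sound; the final normalization is an isomorphism over the smooth $X$ and, as you verify, preserves (1), while giving a globally normal $\Xb$ — which is even stronger than the stated condition (2). Note, however, that the paper does not prove this lemma at all: it is recalled verbatim from Levine \cite{Levine3}, Corollary 1.2, so there is no internal proof to compare against; your argument (separate $\ol{C}$ and $\ol{D}$ at infinity by finitely many boundary blow-ups, then normalize) is a complete, self-contained proof in the same spirit as Levine's, and it also yields the scheme-theoretic equality $\ol{C}\cap\ol{D}=C\cap D$, which is what is actually used later in the paper (e.g.\ in the identification $\ol{\nu}^*(\ol{D})=\nu^*(D)$ in the proof of Proposition \ref{prop:key-prop}).
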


The following Proposition is an application of a classical moving argument for $0$-cycles on smooth varieties. Its proof is representative of the arguments   used repeatedly in \cite{BK}, Sections 5 and 6 (see, in particular, Lemma 5.4 in \textit{loc.cit.}).
\begin{prop}\label{prop:key-prop} Let $X$ be an affine smooth $k$-variety of dimension at least $2$ and let $D$ be an effective Cartier divisor on it. Let $\nu\colon C \hookrightarrow X$ be an integral curve,  properly intersecting $D$.
	Write $\nu_{*}$ for the push-forward map (see \ref{sec:map-Pic-Chow-X})
\[\nu_{*} \colon  \Pic(C, \nu^*(D)) \to \CH_0(X|D).\]
Let $n$ be an integer prime to the characteristic of $k$ and let $\alpha$ in  $\Pic(C, \nu^*(D))$ be an $n$-torsion class. Then $\nu_{*}(\alpha) = 0$ in $\CH_0(X|D)$. 
\end{prop}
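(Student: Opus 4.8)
\emph{Overview and Step 1.} The plan is to reduce the statement to the vanishing of the image of a suitable class coming from the curve $C$, and then to kill that class by a moving argument on the affine variety $X$ combined with the rigidity theorem (Theorem~\ref{thm:rigidity}). First I would observe that $\Pic(C,\nu^*D)$ is an $n$-divisible group: by \eqref{eq:fundsequencePic} applied to the pair $(C,\nu^*D)$ it is an extension of $\Pic(C)$ by $\coker\bigl(\Gamma(C,\cO^\times)\to\Gamma(\nu^*D,\cO^\times)\bigr)$, the latter being $n$-divisible because $\nu^*D$ is a zero-dimensional $k$-scheme supported at points with residue field $k$ and $n$ is prime to the characteristic of $k$; and $\Pic(C)$ is $n$-divisible too, since via the conductor square for the normalization $C^N\to C$ it is an extension of $\Pic(C^N)$ by a cokernel built from unit groups of Artinian $k$-algebras, while $\Pic(C^N)$ is a quotient of $\Pic(\ol C^N)$, hence of the divisible group $\operatorname{Pic}^0(\ol C^N)(k)$ (here the presence of a point at infinity of degree $1$ kills the degree factor). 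Since $n\alpha=0$, we may therefore write $\alpha=n\beta$ with $\beta\in\Pic(C,\nu^*D)$. Let $\zeta$ be a $0$-cycle on $U=X\setminus D$, supported on $\nu(C)\cap U$, representing $\nu_*(\beta)\in\CH_0(X|D)$ (via the maps of \S\ref{sec:construction-Pic-Chow} and \S\ref{sec:map-Pic-Chow-X}). Then $n[\zeta]=\nu_*(n\beta)=\nu_*(\alpha)$ and hence $n^2[\zeta]=\nu_*(n\alpha)=0$; so the class we must show vanishes, $\nu_*(\alpha)=n[\zeta]$, is $n$-torsion and comes from a $0$-cycle on a curve inside $X$.

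\emph{Step 2: moving and rigidity.} To conclude $\nu_*(\alpha)=n[\zeta]=0$ I would apply Lemma~\ref{lem:good-compactification} to fix a projective closure $\Xb\supset X$ with $\ol C\cap\ol D=C\cap D$ and $\Xb\setminus\ol D$ normal, and then run a moving argument of the kind used throughout \cite[Sections~5 and 6]{BK} (cf.\ Lemma~5.4 there) and in \cite{Levine3}. Using that $X$ is affine of dimension at least $2$, one spreads the finite support of $\zeta$ out along a generic one-parameter family of (complete intersection) curves inside $X$ passing through it: this should produce a smooth base curve $T$, closed points $t_0,t_1\in T$, and a flat family $W$ of $1$-cycles on $T\times X$ with $|W|\subset T\times U$, such that the associated $0$-cycle $W(t_0)$ represents $\nu_*(\alpha)$, $W(t_1)$ is rationally equivalent with modulus to $0$, and $[W(t)]$ is $n$-torsion in $\CH_0(X|D)$ for all $t$ in a dense open $T_0\subset T$. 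Then $n[W(t)]=0$ on $T_0$, so Theorem~\ref{thm:rigidity} applies and $t\mapsto[W(t)]$ is constant on $T(k)$; comparing $t_0$ and $t_1$ gives $\nu_*(\alpha)=[W(t_1)]=0$.

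\emph{The main obstacle.} Step 1 is routine bookkeeping with the exact sequences recalled in \S\ref{sec:Picard}. The crux is Step 2: one must carry out a Bertini-type deformation of the curve carrying $\zeta$ inside the affine variety $X$ that keeps the resulting $1$-cycle flat over the base and, most importantly, entirely disjoint from the divisor $D$ — so that the modulus condition keeps making sense along the family and Theorem~\ref{thm:rigidity} is genuinely applicable — while still allowing $\nu_*(\alpha)$ to degenerate to $0$ and keeping the fibers $n$-torsion. Controlling the support away from $D$ throughout the whole moving process is the feature absent from Levine's torsion-freeness theorem on singular affine varieties, and is where the real work of the proof lies; it is precisely the sort of argument developed in \cite{BK}.
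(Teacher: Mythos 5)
There is a genuine gap. Your overall strategy (produce a one-parameter family of $n$-torsion $0$-cycles with modulus connecting $\nu_*(\alpha)$ to the empty cycle, then apply Theorem \ref{thm:rigidity}) is the same as the paper's, but the family is never actually constructed: Step 2 only asserts that a smooth curve $T$, points $t_0,t_1$, and a flat cycle $W$ with the three required properties ``should'' exist, and your final paragraph concedes that this is where the real work lies. Step 1, on the other hand, carries no weight: $\nu_*(\alpha)$ is $n$-torsion immediately from $n\alpha=0$, and writing $\alpha=n\beta$ does not help, because the difficulty is not torsionness of the class but exhibiting a connecting family all of whose members are $n$-torsion in $\CH_0(X|D)$ and supported off $D$. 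Moreover, the naive plan you sketch --- spread the support of $\zeta$ along a generic family of complete-intersection curves through it --- would fail as stated: torsion is not preserved under such a deformation, so there is no reason the cycles $W(t)$ for general $t$ are $n$-torsion, and rigidity would not apply.

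For comparison, the paper obtains the family by a much more structured construction, each step of which is nontrivial and interacts with the modulus condition: (i) reduce to an affine surface containing $C$ via the Altman--Kleiman Bertini theorem, after fixing a compactification as in Lemma \ref{lem:good-compactification} so that $\ol{C}\cap\ol{D}=C\cap D$; (ii) build a pencil $\{W_t\}$ of hyperplane sections interpolating between $W_0=\ol{C}\cup E$ (with $E$ integral, meeting $\ol{D}$ properly and away from $C\cap D$) and a member $W_\infty$ supported entirely on $\Xb\setminus X$ --- this is where affineness of $X$ enters, since $\Xb\setminus X$ supports a very ample line bundle; (iii) prove representability of $\mathbf{Pic}^0_{(W_S|D_S)/S}$ (cohomological flatness, Artin/Raynaud); (iv) lift $\alpha$ to $\tilde{\alpha}\in\Pic(\ol{C},\nu^*D)$, so that $n\tilde{\alpha}$ is supported on $\ol{C}\setminus C$, and use the $n$-divisibility of the affine kernel of $\Pic(W_0,W_0\cap\ol{D})\to\Pic(\ol{C},\nu^*D)\times\Pic(E,E\cap\ol{D})$; (v) take $T$ to be a component of the preimage of a section $\gamma'$ under multiplication by $n$ on the relative Picard scheme, so that $nZ(t)=Z'(t)$ is supported on $\Xb\setminus X$ --- this is precisely how all members of the family become $n$-torsion in $\CH_0(X|D)$; (vi) extend $T$ by a point $\beta_\infty$ over $\infty$, where the cycle is empty, and conclude by rigidity. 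Since your write-up defers exactly this construction, the proposal does not constitute a proof.
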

\begin{proof}
Let $\Xb$ be a compactification of $X$ satisfying conditions (1) and (2) of Lemma \ref{lem:good-compactification}. We can find an integral projective surface $\ol{Y}\subset \Xb$ satisfying 
\begin{romanlist}
\item $\ol{Y} \supset \ol{C}$;
\item $\ol{Y}$ intersects $\ol{D}$ properly;
\item $Y = \ol{Y} \cap X$ is  affine. The divisor $D_Y = Y\cap D$ is an effective Cartier divisor on $Y$.
\end{romanlist}
The condition that $\overline{D}$ intersects properly $\overline{Y}$ is automatic for general $\overline{Y}$, since we can always assume that $\overline{Y}$ does not contain the generic points of $D$.  Since $X$ is smooth by assumption, the Bertini theorem of Altman and Kleiman  \cite[Theorems 1 and 7]{AK} guarantees that we can find $\overline{Y}$ containing $\overline{C}$ and such that the affine surface $Y$ is smooth away from $C$. Since   $\overline{Y}$ is constructed by intersecting independent general hypersurface sections of $\Xb$,   we see that the affine part $Y$ is a complete intersection on $X$. In particular, it is Cohen-Macaulay. 

By condition (2) of Lemma \ref{lem:good-compactification}, we can moreover assume that $\ol{Y}$ is such that every point of $\ol{C} \setminus C$ is either a smooth point of $\ol{Y}$ or an isolated singularity. By resolving the singularity of $\ol{Y}$ that are on $\ol{C}\setminus C$ and the singularities of $\ol{C} \setminus C$, we can actually assume that every point $x\in \ol{C}\setminus C$ is a regular point of $\ol{Y}$ and that $\ol{C}$ is also regular at $x$. Finally, again by condition (2) of Lemma \ref{lem:good-compactification} and by what we just said, we can also assume that $\overline{Y}$ is normal at each point of $(\overline{Y}\setminus Y)\setminus D_Y$. In particular, $\overline{Y}$ is Cohen-Macaulay except possibly for the points of $B = D_Y\cap (\overline{Y}\setminus Y)$ (recall that a normal local ring is $S_2$ by Serre's criterion, and since $\overline{Y}$ is two dimensional this implies being Cohen-Macaulay). By resolving the singularities of $\ol{Y}$ that are on $B$, we can then assume that $\ol{Y}$ itself is a Cohen-Macaulay surface.

Replacing $X$ with $Y$, we are reduced to the following case
\begin{listabc}
\item $X$ is an integral affine surface,  smooth away from $C$, and $D$ is an effective Cartier divisor on it. 
\item $\ol{X}$ is a projective compactification of $X$, and is a Cohen-Macaulay surface.
\item $\nu\colon C\hookrightarrow X$ admits a compactification $\ol{\nu}\colon \ol{C}\to  \Xb$ such that $\ol{C}$ is regular at $\ol{C}\setminus C$ and such that $\ol{C}\cap \ol{D} = C\cap D$.  The surface $\ol{X}$  is regular at every point of $\ol{C}\setminus C$.
\end{listabc}
Let $F$ denote the closed complement $\Xb \setminus X$. Since $X$ is affine, $F$ is the support of a very ample line bundle $\cL$ on $\Xb$. For $d$ sufficiently large, we can find global sections \[s_0\in \H^0(\Xb, \cL^{\tensor d}\tensor \cI_{\ol{C}}) \subset \H^0(\ol{X}, \cL^{\tensor d}) \quad \text{and } s_\infty \in \H^0(\Xb, \cL^{\tensor d})\] such that $(s_0)=W_0 = \ol{C} \cup E$, for $E$ integral, intersecting $\ol{D}$ properly and away from $C\cap D$, is a reduced connected curve, and such that $(s_\infty) = W_\infty$ is contained in $\Xb \setminus X = F$.  

Indeed, we can find sections $a_0, \ldots, a_m$ of $V=\H^0(\Xb, \cL^{\tensor d}\tensor \cI_{\ol{C}})$ such that the rational map $\psi\colon \Xb\to \mathbb{P}^m$ defined by $(a_0:a_1:\ldots a_m)$ is a locally closed immersion on $\Xb\setminus \ol{C}$. In particular, it is separable. Since $\Xb$ is integral and $\ol{C}$ is reduced, the general divisor $K$ in $V$  is generically reduced, and if $K'$ denotes the closure of $K\setminus \ol{C}$, then $K'$ is irreducible and reduced (this is classical Bertini, since $\overline{X}\setminus \ol{C}$ is reduced and irreducible).  Moreover, since $\Xb$ is Cohen-Macaulay, $K$ is itself a Cohen-Macaulay scheme. But a generically reduced Cohen-Macaulay scheme is in fact reduced by e.g.~\cite[Prop. 14.124]{GW} (see also  Lemma 3.1 at page 114 in \cite{LW} on the existence of enough pure Cartier curves, where the same argument is used). Hence $W_0$ above can be taken to be reduced and $E$ can be taken to be integral. The condition that the extra component $E$ intersects $D$ properly and away from $C\cap D$ is clearly open on the space of sections.

 Using $s_0$ and $s_\infty$, we can define a pencil $P = \{W_t \,|\, t\in \mathbb{P}^1\}$ of hyperplane sections of $\Xb$, interpolating between $W_0$ and $W_\infty$. More precisely, let $W$ be the flat cycle
\[W\subset \Xb\times \mathbb{P}^1 \]
given by the equation $s_0 + t s_\infty$ for $t$ a rational coordinate on $\mathbb{P}^1$. Then for general $t$, $W_t$ is  integral, intersects  $\ol{D}$ properly and misses the singular locus of $\ol{D}_{red}$.

Let $S$ be the spectrum of the local ring of $\mathbb{P}^1$ at $0$, $s$ its closed point, $\eta$ its generic point.
We denote by $\pi_S\colon W_S\to S$ the base change of $W\to \mathbb{P}^1$ to $S$. By construction, the special fiber $(W_S)_s$ coincides with $W_0$, while the generic fiber $W_\eta \to k(t)$ represents the generic member of the pencil. The family $W_S\to S$ is flat, projective, so  $\chi(\cO_{W,t}) =\chi(\cO_{W,s}) = \chi(\cO_{W_0}) $. Since $W_t$ is integral and $W_0$ is reduced and connected, we conclude that the curves in the family have constant arithmetic genus $g = p_a(W_t)$. Hence, the morphism $\pi_S$ is cohomologically flat in dimension $0$ (see \cite[page 206]{NeronModels}). By Artin's representability theorem (\cite[Theorem 8.3.1]{NeronModels}), the relative Picard $\mathbf{Pic}^0_{(W_S | D_S) / S} \to S$ is representable by a (locally finitely presented) algebraic space over $S$,  where $D_S$ is the base change of $\ol{D}$ to $W_S$ (and is an horizontal divisor on $W_S$). Actually, Artin's theorem shows the representability of  $\mathbf{Pic}^0_{W_S / S} \to S$, but $\mathbf{Pic}^0_{(W_S | D_S) / S} \to S$ is a torsor over  $\mathbf{Pic}_{W_S / S} \to S$ for the group $G = \pi_{D,*}\mathbb{G}_{m,D}$. Thus $\mathbf{Pic}^0_{(W_S | D_S) / S} \to S$ is representable by an algebraic space as well (see also \cite[Theorem 5.2]{RaynaudRep}). Since $h^0(\cO_{W_\eta}) = h^0(\cO_{W_0})$, by \cite[Theorem 4.1.1, Proposition 8.0.1]{RaynaudRep}, $\mathbf{Pic}^0_{W_S / S}$ coincides with its maximal separated quotient, and thus it is representable by a scheme, separated and locally of finite type over $S$. By \cite[Lemma 3.6]{Gro}, the same holds for  $\mathbf{Pic}^0_{(W_S | D_S) / S}$.

 Since the canonical restriction map
\[\Pic(\ol{C},\nu^*(D) = \ol{\nu}^*(\ol{D})) \xrightarrow{j^*} \Pic(C,\nu^*D) \] 
is surjective, as $\ol{C} \cap \ol{D} = C\cap D$, we can lift $\alpha$ to a cycle $\tilde{\alpha}$ in $\Pic(\ol{C},\nu^*D)$. By assumption, we have that $n \alpha = 0$ in $\Pic(C,\nu^*D)$, so that $n\tilde{\alpha} \in \ker(j^*)$, that is the subgroup of $\Pic(\ol{C},\nu^*D)$ that is generated by classes of  (regular) points in $\ol{C}\setminus C$. Since $W_0 = \ol{C} \cup E$, we have a canonical map
\begin{equation}\label{eq:relative-pics}\pi^*\colon \Pic(W_0, W_0\cap \ol{D}) = \Pic(W_S \tensor k(s) , D_S\tensor k(s)) \to \Pic(\ol{C}, \nu^*D) \times  \Pic(E, E\cap \ol{D}) \end{equation}
induced by $\pi\colon \ol{C}\amalg E\to W_0$, that is surjective. Note that the relative Picard groups in \eqref{eq:relative-pics} are well-defined, even if $E\cap D$ or $W_0\cap \ol{D}$ are not Cartier divisors or if $E$ and $W_0$ are not regular (see \ref{sec:Picard}). By \eqref{eq:fundsequencePic}, we have the following exact sequences:

\begin{align} &\Gamma(W_0, \cO^\times) \to \Gamma(W_0\cap \ol{D}, \cO^\times)\to \Pic(W_0, W_0\cap \ol{D}) \to \Pic(W_0)\to0 \\ \nonumber
	&k^\times\to \Gamma(\nu^*D, \cO^\times)\to \Pic(\ol{C}, \nu^*D) \to \Pic(\ol{C})\to 0\\\nonumber
	& k^\times \to \Gamma(E\cap \ol{D},\cO^\times)\to  \Pic(E, E\cap \ol{D}) \to \Pic(E)\to 0.
	\end{align}
Since $E\cap \ol{D}\cap \ol{C} =\emptyset$, we have that $\Gamma(W_0\cap \ol{D}, \cO^\times) =\Gamma(\nu^*D, \cO^\times) \times  \Gamma(E\cap \ol{D},\cO^\times)$, and therefore the kernel of $\pi^*$ in \eqref{eq:relative-pics} coincides with the kernel of the map (still induced by $\pi$), $\Pic(W_0) \to \Pic(\ol{C}) \times \Pic(E) $. Let $\tilde{W_0}$ (resp.~$\tilde{\ol{C}}$ and $\tilde{E}$) be the normalization of $W_0$ (resp.~of $\ol{C}$ and $E$). Clearly, $\tilde{W_0} = \tilde{C}\amalg \tilde{E}$. We have then canonical exact sequences 
\begin{align*}&0\to K\to \Pic(W_0)\to \Pic(\tilde{\ol{C}})\times \Pic(\tilde{E})\to 0 \\&0\to L\to  \Pic({\ol{C}})\times \Pic({E}) \to \Pic(\tilde{\ol{C}})\times \Pic(\tilde{E})\to 0\end{align*}
and the groups $K$ and $L$ can be identified with the groups of $k$-rational points of two affine, commutative, and connected algebraic groups (namely, the affine parts of the generalized Jacobian varieties of the singular curves $W_0$ and $\ol{C}\amalg E$). The kernel $U = \ker (\pi^*)$ is then (the group of $k$-points of) the kernel of the map $\pi\colon K\to L$ between two affine and commutative algebraic groups. It is therefore itself a commutative affine algebraic group over $k$, and so it's group of points is $n$-divisible, being $n$ coprime with the characteristic of $k$.

We choose now $S'\to S$  a DVR dominating $S$ so that there is a section
\[\gamma'\colon S' \to \mathbf{Pic}^0_{(W_{S'} | D_{S'}) / {S'}}  \]
satisfying $\gamma'(s') = n \beta_0$, where $\beta_0$ lifts to  $\Pic(W_{0,S'}, (W_0\cap \ol{D})\times_S S')$ the element $(\tilde{\alpha}, 0)$ of \[ \Pic(\ol{C}, \nu^*D) \times  \Pic(E, E\cap D).\] Here $s'$ denotes the closed point of $S'$, above $0\in \mathbb{P}^1$. Note that, since the class of $n\tilde{\alpha}$ in $\Pic(\ol{C}, \nu^*D)$ is represented by points in $\ol{C}\setminus C$, we can further assume that the divisor $Z'$ on $W_S\times_S S'$ representing $\gamma'$ is supported on $F_{S'}$.

Write \[n_{\mathbf{Pic}}\colon \mathbf{Pic}^0_{(W_{S'} | D_{S'})}  \to \mathbf{Pic}^0_{(W_{S'} | D_{S'})} \]
for morphism of schemes given by the multiplication by $n$ on  $ \mathbf{Pic}^0_{(W_{S'} | D_{S'})}$. 
Let  $T$ be the normalization of an irreducible component of $n_{\mathbf{Pic}}^{-1}(\gamma'(S')) \subset \mathbf{Pic}^0_{(W_{S'} | D_{S'})} $ passing through $\beta_0$ and let $Z$ be the divisor on $W_{S'}\times_{S'} T$ representing $\gamma\colon T\to \mathbf{Pic}^0_{(W_{S'} | D_{S'})}$. Write $p\colon T\to S$ for the composite map. For $t\in T$, let $Z(t)$ denote the divisor on $W_{p(t)}$ given by
\[ Z(t) =  p_{1,*} (Z\cdot (p_2^*(t) \cap \{t\}\times W_{S'}))\]
where $p_1$ and $p_2$ are the two projections from $W_{S'} \times_{S'} T$. Note that, since both $\gamma'$ and $\gamma$ are defining subschemes of the relative Picard scheme $\mathbf{Pic}^0_{(W_{S'} | D_{S'})}$ of line bundles with a trivialization along $D_{S'}$, we have that 
\[Z \subset  W_{S'}\times_{S'} T \hookrightarrow W \times_{\mathbb{P}^1} T \hookrightarrow \ol{X} \times T \]
is supported away from $\ol{D} \times T$. Taking $T$ smaller if necessary, we can also assume that $Z$ is actually closed in $X\times T$.  Moreover, the choice of $T$ gives that 
\[ n Z(t) = Z'(t), \quad \text{ in } \Pic(W_{p(t)}, D_{p(t)})\]
(note that for $t$ in a dense subset of $T$, $W_{p(t)}$ is actually smooth over $k$ and $D_{p(t)}$ is an effective Cartier divisor on it). Pushing forward to $X$ gives then a map
\[Z\colon T(k)\to \CH_0(X|D) \]
and since $Z'(t)$ is supported on $\ol{X}\setminus X$ for every $t$, $n Z(t) =0$ in $\CH_0(X|D)$. In other words, we have a family of $n$-torsion $0$-cycles on $X$ with modulus $D$, parametrized by $T$ and represented by $Z$. By the Rigidity Theorem \ref{thm:rigidity}, the family is constant. In particular,  whenever $T$ is realized as dense open subset of any curve ${T}^*$ such that $Z$ extends to 
\begin{equation}\label{eq:1-key-lemma} Z^*\colon T^*(k)\to \CH_0(X|D)\end{equation}
then for every $t\in  T^*(k)$, the class of $Z^*(t)$ will be the same, since Theorem \ref{thm:rigidity} only requires to check what happens on a dense set. 
Let now $\ol{T}$ be a smooth projective model of $T$, containing $T$ as open dense subset. Let $\beta_\infty$ a point of $\ol{T} \setminus T$ above $\infty \in \mathbb{P}^1$ and take $T^*$ to be $T\cup \{\beta_\infty\}$. Then since $Z\cap (X\times \beta_\infty) \subset W_\infty \subset \ol{X} \setminus X$, $Z$ is still closed in $X\times T^*$, it defines a map as in \eqref{eq:1-key-lemma}.
In particular, since $Supp(Z^*(\beta_\infty)) = Z\cap X\times \beta_\infty = \emptyset$, $Z^*(\beta_\infty) = 0$ in $\CH_0(X|D)$. Thus
\[\nu_*(\alpha) = Z(\beta_0) = Z^*(\beta_\infty) = 0\quad \text{ in } \CH_0(X|D)\]
completing the proof.
\end{proof}

The following lemma is easy to check. 
\begin{lem}\label{lem:lem-blow-up}Let $X$ be an affine smooth $k$-variety, of dimension at least 2 and let $D$ be an effective Cartier divisor on it. Let $u\colon X'\to X$ be a sequence of blow-ups with center in points lying over $X\setminus D$. Then one has an isomorphism
\[u_*\colon \CH_0(X'|D) \xrightarrow{\simeq} \CH_0(X|D).\]
\end{lem}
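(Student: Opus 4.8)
The strategy is to prove the statement for a single blow-up $u\colon X' \to X$ at one closed point $x \in X \setminus D$ and then iterate. Set $E = u^{-1}(x)$ for the exceptional divisor. Since the center $x$ lies in the open set $X \setminus D = U$, the morphism $u$ restricts to an isomorphism away from $E$; in particular $u^{-1}(D) \to D$ is an isomorphism (so we may and do write $D$ for $u^*D$ on $X'$), and $u$ induces an isomorphism on an open neighborhood of $D$. The key point is that $X'$ is again a smooth affine variety: smoothness is standard, and affineness holds because $X' = \mathrm{Bl}_x(X)$ with $X$ affine of dimension $\geq 2$ is quasi-projective over $X$ and the complement $E$ of the affine open $X' \setminus E \cong X \setminus \{x\}$ is \emph{not} what we want — instead one argues directly: $X'$ can be covered by the $d$ standard affine charts of the blow-up, but more efficiently, $X' \setminus E \cong X \setminus \{x\}$ is affine (since $X$ is affine of dimension $\geq 2$, removing a point keeps it affine only if... no). The cleanest route: $\mathrm{Bl}_x X$ is projective over $X = \Spec A$, and is covered by affine opens; one shows $X'$ is itself affine because it is the blow-up of an affine scheme along an $\mathfrak{m}$-primary ideal, hence $X' = \Proj(\bigoplus_n \mathfrak{m}^n)$ — this is projective, not affine, so instead we should \emph{not} claim $X'$ affine but rather run the argument chart-by-chart, or observe that Proposition \ref{prop:key-prop} and the machinery only require what we will actually use. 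Let me reconsider: the honest claim is that $X'$ is smooth quasi-projective, and we compare $\CH_0(X'|D)$ with $\CH_0(X|D)$ directly via the pushforward.

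Here is the direct comparison. The pushforward $u_*$ is defined on $Z_0(X' \setminus D)$ and respects rational equivalence with modulus: given a curve $\ol{C}' \to X'$ and $f \in G(\ol{C}', \varphi^*D)$, composing with $u$ gives a curve in $X$ (proper meeting $D$, since $\ol{C}'$ does and $u$ is iso near $D$) and the same function witnesses $u_*(\nu'_*\dv f) \in R_0(X|D)$; thus $u_*\colon \CH_0(X'|D) \to \CH_0(X|D)$ is well-defined and visibly \emph{surjective}, because $u$ is an isomorphism over $U \setminus \{x\}$ and any closed point of $U$ can be moved off $x$ within its rational-equivalence-with-modulus class using a curve through it avoiding $D$ — more simply, points of $U \setminus \{x\}$ already lift, and the single point $x$ never appears as a generator. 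To build an inverse, define $v\colon Z_0(X \setminus D) \to \CH_0(X'|D)$ on generators $y \neq x$ by $y \mapsto [u^{-1}(y)]$ (a single point), which makes sense since $y \in U \setminus \{x\}$; one checks $v$ kills $R_0(X|D)$: a curve $\ol{C} \to X$ properly meeting $D$ either avoids $x$ (then it lifts uniquely and the relation lifts) or passes through $x$, in which case its strict transform $\ol{C}' \subset X'$ maps properly to $X'$, meets $E$, and the pulled-back function (using Lemma \ref{lem:lemmautile} for the normalization, or directly) still satisfies the modulus condition along $D$ since $x \notin D$ — so $\dv$ of that function pushes to the lift of the original relation up to cycles supported on $E \subset X' \setminus D$, and those cycles are themselves rationally trivial with modulus (lines in $E \cong \P^{d-1}$, $d = \dim X \geq 2$, are rationally equivalent, hence all points of $E$ give the same class, and one shows that class is $0$ by a pencil argument or by noting $E$ meets a curve through $x$ whose pushforward to $X$ is trivial). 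Hence $v$ descends to $\CH_0(X|D) \to \CH_0(X'|D)$, and $u_* \circ v = \id$, $v \circ u_* = \id$ follow by checking on generators.

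The main obstacle is the bookkeeping around cycles supported on the exceptional divisor $E$: showing that $[z] = 0$ in $\CH_0(X'|D)$ for $z$ a $0$-cycle of degree $0$ supported on $E$, and more subtly that the class of $u^{-1}(y)$ does not depend on choices when $y$ ranges over a rational-equivalence relation passing through $x$. The cleanest way to handle this is: (i) all $k$-points of $E \cong \P^{d-1}$ are rationally equivalent in $X'$ via lines in $E$ (which avoid $D$, so they carry any modulus), giving a well-defined class $[e_0] \in \CH_0(X'|D)$; (ii) choosing a smooth curve $\gamma \subset X$ through $x$ meeting $D$ properly with $\nu_*[\gamma\text{-point}]$ controllable, its strict transform identifies $[e_0]$ with a lift, pinning down $v$ on the generator $x$ is unnecessary since $x \notin Z_0(X \setminus D)$; (iii) a relation $\nu_*(\dv f) = 0$ in $\CH_0(X|D)$ coming from $\ol{C} \ni x$ lifts, after passing to the strict transform $\ol{C}'$, to $\nu'_*(\dv f') = \sum (\text{pts of } \ol{C}' \cap E) - (\text{correction})$, and the correction is a degree-$0$ cycle on $E$, hence $0$ by (i). I would structure the write-up to isolate (i) as the one real computation and treat everything else as formal diagram-chasing on generators and relations, exactly in the spirit of the cited arguments of Levine.
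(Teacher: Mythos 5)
The paper itself gives no argument for this lemma (it is dismissed as ``easy to check''), so the only issue is whether your construction is sound. Your overall strategy is the expected one: reduce to a single blow-up $u\colon X'\to X$ at a closed point $x\in X\setminus D$, note that $u_*$ respects rational equivalence with modulus and is surjective, and build an inverse using the one real computation, your item (i): all closed points of the exceptional divisor $E\cong\P^{d-1}$ have the same class in $\CH_0(X'|D)$, because lines inside $E$ are disjoint from $D$ and hence carry no modulus condition. Iterating over the sequence of blow-ups is then harmless (nothing in the one-step argument uses affineness of the blown-up variety, which is good, since $X'$ is \emph{not} affine, as you eventually concede after the confused opening paragraph).

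There are, however, two concrete errors in the write-up. First, you assert that $x$ ``never appears as a generator'' and later that ``$x\notin Z_0(X\setminus D)$''. This is false: the center $x$ lies in $X\setminus D$, so $[x]$ is a generator of $Z_0(X\setminus D)$, and it genuinely occurs in relations, since a function $f\in G(\ol{C},\varphi^*D)$ on a curve through $x$ may well have a zero or pole above $x$. As written, your map $v$ is therefore not defined on all of $Z_0(X\setminus D)$; the fix is immediate and uses exactly your (i): set $v([x]):=[e_0]$, the common class of the points of $E$. With that, the verification that $v$ kills a relation $\varphi_*(\dv f)$ goes through: lift $\varphi$ to $\varphi'\colon\ol{C}\to X'$ by the valuative criterion ($\varphi'$ is proper and quasi-finite, hence finite, and $\varphi'^*D=\varphi^*D$ since $x\notin D$), and observe that $v(\varphi_*\dv f)-\varphi'_*(\dv f)$ is a sum of degree-zero cycles supported on $E$, hence zero by (i). Second, your parenthetical claim that the class of a single point of $E$ is zero in $\CH_0(X'|D)$ is both unnecessary and false in general: pushing it forward would give $[x]=0$ in $\CH_0(X|D)$, which fails for general $(X,D)$. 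Only the vanishing of degree-zero cycles supported on $E$ is needed, which is what your step (iii) correctly invokes; delete the stronger claim and define $v$ at $[x]$ as above, and the proof is complete.
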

We then have the following Corollary to Proposition \ref{prop:key-prop}.
\begin{cor}\label{cor:torsion-zero-blow-up} Let $X, X', D$ and $u$ be as in Lemma \ref{lem:lem-blow-up}. Let $\nu\colon C \hookrightarrow X'$ be an integral curve in $X'$, properly intersecting $D$. Assume that $C\setminus |\nu^*(D)|$ is normal. 
Write $\nu_{*}$ for the push-forward map
\[\nu_{*} \colon \Pic(C, \nu^*(D))   \twoheadrightarrow\CH_0(C |\nu^*(D))  \to \CH_0(X'|D)\xrightarrow{\simeq} \CH_0(X|D).\]
Let $n$ be an integer prime to the characteristic of $k$ and let $\alpha$ in  $\Pic(C, \nu^*(D))$ be an $n$-torsion class. Then $\nu_{*}(\alpha) = 0$ in $\CH_0(X'|D)$ (and a fortiori in $\CH_0(X|D)$). 
\begin{proof}Let $E$ be the exceptional divisor of the blow-up $u\colon X'\to X$. We have to consider two cases. If $\nu(C)\subset E$, then $C$ is a projective curve and any cycle $\alpha \in \CH_0(C |\nu^*(D))$ of degree zero satisfies $u_*(\alpha) =0$. In particular, this applies to torsion cycles. If $\nu(C)$ is not completely contained in the exceptional divisor, write $Z$ for the image $u(C)\subset X$ and write $j\colon Z\to X$ for the inclusion. Since the center of the blow-up is disjoint from $D$, $u$ induces an isomorphism on a neighborhood of $\nu^*(D)$ between $C$ and $Z$. In particular, $\nu^*(D) \cong j^*(D)$. 
	We have a commutative diagram
    \[ \xymatrix{ \Pic(C ,\nu^*(D))\ar[r]^{\nu_*} & \CH_0(X'|D)\ar[d]^{u_*}\\ 
    \Pic(Z, j^*(D))\ar[u]^{u^*}\ar[r]^{j_*} &\CH_0(X|D)
    }
    \]
where we keep writing $u$ for the induced morphism $C\to Z$. The morphism $u^*$ is surjective and the kernel is $n$-divisible (in fact, the kernel of $u^*$ agrees with the kernel of $\Pic(Z)\to \Pic(C)$ since $\nu^*(D) \cong j^*(D)$, and this can be seen to be $n$-divisible as discussed in the proof of Proposition \ref{prop:key-prop} or by an explicit cohomological computation). In particular, for every torsion class $\alpha \in   \Pic(C ,\nu^*(D))[n]$, we can find $\beta\in \Pic(Z,j^*(D))[n]$ such that $u^*(\beta)= \alpha$. But then we have $0=j_*(\beta)= u_*(\nu_*(\alpha))$ (by Proposition \ref{prop:key-prop}), from which we conclude $\nu_*(\alpha) =0$ since $u_*$ is an isomorphism.
    \end{proof}
\end{cor}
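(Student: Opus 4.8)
The plan is to deduce the statement from Proposition \ref{prop:key-prop} by pushing the curve $C$ forward along $u$, exploiting that $\CH_0(-|D)$ is unchanged by the blow-ups in $u$ (Lemma \ref{lem:lem-blow-up}). Since $u_*\colon\CH_0(X'|D)\xrightarrow{\simeq}\CH_0(X|D)$ is an isomorphism, it suffices to show $u_*(\nu_*(\alpha))=0$ in $\CH_0(X|D)$. Let $E\subset X'$ be the exceptional divisor; as the center of $u$ lies over $X\setminus D$, we have $E\cap D=\emptyset$. I split into two cases. If $\nu(C)\subset E$, then $\nu^*(D)=\emptyset$, so the normality hypothesis forces $C$ itself to be normal, and $C$ is proper over $k$ (it is closed in $E$, which is proper over $X$); hence $C$ is a smooth projective curve and $\Pic(C,\nu^*(D))=\Pic(C)\simeq\CH_0(C)$. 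The composite $u\circ\nu$ contracts the irreducible curve $C$ to a single closed point $x_0$ of the (finite) center of $u$, which lies in $X\setminus D$; a torsion class $\alpha\in\Pic(C)$ has degree $0$, so its pushforward is $(\deg\alpha)[x_0]=0$ in $\CH_0(X|D)$, and this case is done.

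Now suppose $\nu(C)\not\subset E$. Then $u\circ\nu$ is non-constant, and $u$ being proper, $Z:=u(C)$ is a closed integral curve in $X$ properly meeting $D$; the induced map $u|_C\colon C\to Z$ is finite and birational. Because the center of $u$ avoids $|D|$, $u|_C$ is an isomorphism over a neighbourhood of $D$; in particular $\nu^*(D)\simeq j^*(D)$ as subschemes (with $j\colon Z\hookrightarrow X$), and $j^*(D)$ is an effective Cartier divisor on $Z$ (the restriction of the Cartier divisor $D$ to $Z\not\subset|D|$). Pulling back pairs $(\cL,\sigma)$ gives $u^*\colon\Pic(Z,j^*(D))\to\Pic(C,\nu^*(D))$, and because $u\circ\nu=j\circ u|_C$ with $u|_C$ birational — so that $(u|_C)_*\circ(u|_C)^*$ is the identity on the relevant cycle/relative-Picard groups — the square
\[ \xymatrix{ \Pic(C ,\nu^*(D))\ar[r]^{\nu_*} & \CH_0(X'|D)\ar[d]^{u_*}\\
    \Pic(Z, j^*(D))\ar[u]^{u^*}\ar[r]^{j_*} &\CH_0(X|D)
    }
    \]
commutes.

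The crux is the claim that $u^*$ is surjective with $n$-divisible kernel. Granting it, lift $\alpha$ to some $\beta_0\in\Pic(Z,j^*(D))$, write $n\beta_0=n\gamma$ with $\gamma\in\ker u^*$, and set $\beta:=\beta_0-\gamma$: this is an $n$-torsion class with $u^*\beta=\alpha$. Proposition \ref{prop:key-prop}, applied to $j\colon Z\hookrightarrow X$ in the smooth affine variety $X$ of dimension $\geq2$, gives $j_*(\beta)=0$ in $\CH_0(X|D)$; by commutativity of the square, $u_*(\nu_*(\alpha))=u_*(\nu_*(u^*\beta))=j_*(\beta)=0$, and since $u_*$ is an isomorphism, $\nu_*(\alpha)=0$ in $\CH_0(X'|D)$, as desired.

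It remains to justify the claim about $u^*$, and this is the step I expect to require real care. One compares the exact sequence \eqref{eq:fundsequencePic} for $(Z,j^*(D))$ with the one for $(C,\nu^*(D))$: the isomorphism $\nu^*(D)\simeq j^*(D)$ identifies the divisor-related terms, and $\Pic(Z)\to\Pic(C)$ is surjective for the finite birational morphism $u|_C$, so a diagram chase yields surjectivity of $u^*$. The same chase identifies $\ker(u^*)$, modulo a subquotient of the $n$-divisible group $\Gamma(\nu^*(D),\cO^\times)$, with $\ker(\Pic(Z)\to\Pic(C))$; the latter is the group of $k$-points of a smooth connected affine commutative algebraic group over $k$ — the affine part of the generalized Jacobian of $Z$ measured against its partial normalization $C$ — hence $n$-divisible because $n$ is prime to $\chark k$, exactly as in the generalized-Jacobian argument inside the proof of Proposition \ref{prop:key-prop}. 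The delicate point throughout is that $Z$ need not be normal near the blow-up center, so one must check by hand that the comparison of relative Picard groups along the contraction $u|_C$ behaves there as expected; the remaining ingredients (the case split, blow-up invariance, commutativity of the square, and the appeal to Proposition \ref{prop:key-prop}) are routine.
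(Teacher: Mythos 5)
Your proposal is correct and follows essentially the same route as the paper: the same case split on whether $\nu(C)$ lies in the exceptional divisor, the same commutative square relating $\Pic(C,\nu^*(D))$, $\Pic(Z,j^*(D))$, $\CH_0(X'|D)$ and $\CH_0(X|D)$, the same key claim that $u^*$ is surjective with $n$-divisible kernel (identified with $\ker(\Pic(Z)\to\Pic(C))$ and handled via the generalized-Jacobian/affine-algebraic-group argument), and the same conclusion via Proposition \ref{prop:key-prop} and the isomorphism $u_*$ of Lemma \ref{lem:lem-blow-up}. Your explicit lifting of $\alpha$ to an $n$-torsion class $\beta$ just spells out the step the paper states more briefly.
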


\begin{rmk}\label{rmk:Pic-and-Chow-divisibility-Corollary}Note that, since $C\setminus |\nu^*(D)|$ is normal by assumption, the kernel $K_{\ol{C}|E}$ of the natural surjection $\Pic(C, \nu^*(D))   \twoheadrightarrow\CH_0(C |\nu^*(D))$ is $n$-divisible (see \ref{sec:construction-Pic-Chow}). Hence, there is a surjection $\Pic(C, \nu^*(D))[n]   \twoheadrightarrow\CH_0(C |\nu^*(D))[n]$ between the $n$-torsion subgroups, i.e.~we can represent each $n$-torsion $0$-cycle by a class in the relative Picard group. A direct consequence of  Corollary  \ref{cor:torsion-zero-blow-up} is then that any $n$-torsion $0$-cycle on $C$ with modulus $\nu^*(D)$ vanishes in $\CH_0(X'|D)$.
	\end{rmk}
\subsection{Vanishing results}We now come to the main results of this note.

\subsubsection{}\label{def:definition-n-1}We continue with the above notations, so $X$ is a smooth affine variety over an algebraically closed field $k$, $D$ is an effective Cartier divisor on $X$ and $X'\to X$ is obtained from $X$ by a sequence of blow-ups with centers lying over $X\setminus D$. 

We will say that an integral curve $\nu\colon C\hookrightarrow X'$ contained in $X'$ (or in $X$) is a good curve with respect to $D$ if the following conditions are satisfied:
\begin{listabc}
	\item The curve $C$ intersects $D$ properly, so that $\nu^*(D)$ is an effective Cartier divisor on $C$.
	\item The open complement $C\setminus |\nu^*(D)|$ is normal.
	\end{listabc}
For a good curve $C$ in $X$ or in $X'$, we have, thanks to Proposition \ref{prop:key-prop}, Corollary \ref{cor:torsion-zero-blow-up} and Remark \ref{rmk:Pic-and-Chow-divisibility-Corollary}, that the pushforward map $\nu_*$ induces the zero map
\begin{equation}\label{eq:zero-map-torsion}\nu_*\colon \CH_0(C|\nu^*(D))[n] \xrightarrow{0} \CH_0(X|D)\quad  \text{( or $\nu_*\colon \CH_0(C|\nu^*(D))[n] \xrightarrow{0} \CH_0(X'|D)$) }\end{equation}
for $n$ an integer prime to the characteristic of $k$.
\subsubsection{}
Let $\alpha$ be a $0$-cycle with modulus   on $C$, supported on $C \setminus \nu^*D$, giving a class $\alpha \in \CH_0(C|\nu^*D)$. Since $C$ is good, $\alpha$ is $n$-divisible in $\CH_0(C|\nu^*D)$ (since the group $\Pic(C, \nu^*(D))$ is $n$-divisible, and $\CH_0(C|\nu^*D)$ is a quotient of it). Then there is a $0$-cycle $\beta$ on $C$ such that $n\beta = \alpha$ in $\CH_0(C|\nu^*D)$. 
We define the element $n^{-1}_C(\alpha) \in \CH_0(X'|D)$ to be the class of $\nu_*(\beta)$. By  \eqref{eq:zero-map-torsion}, the class is well defined.  

\begin{rmk} Let $(\Xb,D)$ be a pair consisting of a smooth and projective $k$-variety $\Xb$ and an effective Cartier divisor $D$  on it. Let $T_{\Xb|D}$ denote the subgroup of the group of degree zero $0$-cycle $\CH_0(\Xb|D)^0$ that is generated by the images of torsion $0$-cycles on proper smooth curves mapping to $\Xb$ (and having image not contained in $D$). In \cite{tor-div-rec}, Section 2, we proved that the operation $n^{-1}$ as defined in \ref{def:definition-n-1} satisfies two important properties (the second implied by the first)
\begin{romanlist} 
\item given two smooth curves $C_1$ and $C_2$ in $\Xb$ such that $\alpha\in C_1\cap C_2$, one has the equality \[n_{C_1}^{-1}(\alpha) = n_{C_2}^{-1}(\alpha)\quad  \text{ in $\CH_0(\Xb|D)^0/T_{\Xb|D}$ }\]  
\item there is a well defined map
\[n_X^{-1}\colon \CH_0(\Xb|D)^0/T_{\Xb|D} \to \CH_0(\Xb|D)^0/T_{\Xb|D}\]
such that, for every $\alpha \in \CH_0(\Xb|D)^0/T_{\Xb|D}$, we have $n_X^{-1}(n\alpha) = \alpha$ in $\CH_0(\Xb|D)^0/T_{\Xb|D}$.
\end{romanlist}
The projectivity of $\Xb$ was used in an essential way to prove the two statements. Replacing $\Xb$ with $X$ affine, we have to modify slightly the arguments using the same strategy as in the proof of \ref{prop:key-prop}, making use of the nice compactification provided by Lemma \ref{lem:good-compactification}. 
Again, we closely follow \cite{Levine3} (or \cite{MarcTorsion}, as we did in \cite{tor-div-rec}) to prove the following Lemma.
\end{rmk}
\begin{lem}\label{lem:independence-curve}Let $\nu_1\colon C_1\hookrightarrow X'$ and $\nu_2\colon C_2 \hookrightarrow X'$ be two distinct good curves. Let $\alpha$ be a $0$-cycle with modulus on $X'$, supported on $(C_1\setminus D)\cap (C_2\setminus D)$ and let $n$ be as above an integer prime to the characteristic of $k$. 
	    Then we have an equality in $\CH_0(X'|D)$ 
\[n_{C_1}^{-1}(\alpha) = n_{C_2}^{-1} (\alpha).\]
\begin{proof}
First, we note that since $X$ is smooth and since $X'$ is obtained as blow-up along smooth centres, $X'$ is a smooth $k$-variety as well. We will use a pencil argument, realizing $C_1$ and $C_2$ as components of the two special fiber of a $\mathbb{P}^1$-family of curves in $X'$. 
For this reason, we can introduce an intermediate player: let $C'$ be a general complete intersection of hypersurfaces containing the support of $\alpha$ and intersecting $C_1$ transversally at each point of $C_1\cap C'$. It is clear, by the symmetry between $C_1$ and $C_2$, that it would be enough to show the equality $n_{C_1}^{-1}(\alpha) = n_{C'}^{-1} (\alpha)$ in  $\CH_0(X'|D)$. 

We can then replace $C_2$ with $C'$ and continue the proof. 
Arguing as in the proof of Proposition \ref{prop:key-prop}, we can assume that $X'$ is a surface admitting a projective model $\ol{X'}$ such that $\ol{X'}\setminus \ol{D}$ is normal and such that, for $i=1,2$, $\nu_{C_i}\colon C_i\to X'$ admits a compactification $\ol{\nu_{C_i}} \colon \ol{C_i}\to \ol{X'}$. Using Lemma \ref{lem:good-compactification}, (that does not require the curve $C$ to be integral), we can assume that $\ol{C_i}\cap \ol{D} = C_i \cap D$ for $i=1,2$, so that the choice of the compactification does not effect the modulus condition of the curves. Again as in the proof of Proposition \ref{prop:key-prop}, by solving the singularities of $\ol{X'}$ that are on $\ol{C_i}\setminus C_i$ and the singularities of $\ol{C_i}\setminus C_i$, we can assume that every point $x\in \ol{C_i}\setminus C_i$ is a regular point of $\ol{X'}$ and that $\ol{C_i}$ is also regular there. 

We keep using the notations of Proposition \ref{prop:key-prop}. Let $P$ be the pencil $P = \{W_t | t\in \mathbb{P}^1\}$ of hyperplane sections of $\ol{X'}$ interpolating between $C_1$ and $C_2$. More precisely, we require that 
\begin{romanlist}
\item The generic member $W_t$ is integral and misses the singular locus of $\ol{D}_{red}$. The restriction $W_t\cap X'$ is regular.
\item The base locus of $P$ contains the support of $\alpha$ and misses $\ol{D}$. The rational map $\ol{X'}\dashrightarrow \mathbb{P}^1$ determined by $P$ becomes a morphism after a single blow-up of each point in the base locus (this is achieved by the condition that $C_1$ and $C_2$ intersect transversally). 
\item The special fibers are connected reduced curves of the form $W_0 = C_1 \cup E_1$ and $W_\infty = C_2 \cup E_2$, where $E_i$ are  integral curves, intersect $\ol{D}$ properly and away from $\ol{X'} \setminus X$ and are disjoint from the base locus of $P$. 
\end{romanlist}
The proof of Proposition 2.34 in \cite{tor-div-rec} now goes through: we give a sketch of the argument for completeness. Write $W_P$ for the blow-up of $\ol{X'}$ at every point of the base locus of $P$ and write $Z$ for the divisor on $W_P$ determined by the cycle $\alpha$. Write $u\colon W_P\to \ol{X'}$ for the blow-down map. The pencil $W_S = W_P\times_{\mathbb{P}^1} S\to S$ for $S$ the spectrum of the local ring of $\mathbb{P}^1$ at the origin is a flat projective family with constant arithmetic genus, making the relative Picard functor $\mathbf{Pic}^0_{W_S|D_S}\to S$ representable (as we saw in the proof of Proposition \ref{prop:key-prop}). Using the $n$-divisibility of generalized Jacobians over algebraically closed fields, we can find a DVR $S'\to S$ dominating $S$ so that there is a section
\[\gamma'\colon S' \to \mathbf{Pic}^0_{W_{S'}|D_{S'}} \]
such that $ n \gamma'(s') = Z_s$ and such that $n \gamma'(\ol{\eta'}) = Z_{\ol{\eta}}$, where $s'$ dominates $0\in \mathbb{P}^1$, $\ol{\eta}$ (resp.~$\ol{\eta'}$) is a geometric generic point of $S$ (resp.~$S'$). Let $Z'$ be the horizontal Cartier divisor representing $\gamma'$. Note that we can assume that $Z'$ is supported on $X'\setminus D$ (in particular, away from $\ol{X'}\setminus X$). This defines a map
\[Z'\colon S'\to \CH_0(X'|D)\]
satisfying $n Z'(s') = u_*(Z_s) =\alpha $ and $n Z'(\ol{\eta'}) = u_*(Z_{\ol{\eta'}}) = \alpha $ in $\CH_0(X'_{\ol{\eta'}}|D_{\ol{\eta'}})$. In particular, the family of cycles $Z' - Z'(s')$ parametrized by $S'$ defines a family of $n$-torsion cycles in $\CH_0(X'_{\ol{\eta'}}|D_{\ol{\eta'}})$, which is constant by the rigidity Theorem \ref{thm:rigidity}. Hence we have an equality 
\[n^{-1}_{C_1}(\alpha) = n^{-1}_{W_{\ol{\eta '}}}(\alpha).\]
Replacing $W_0$ with $W_\infty$ gives $n^{-1}_{C_2}(\alpha) = n^{-1}_{W_{\ol{\eta '}}}(\alpha)$, completing the proof.
\end{proof}
\end{lem}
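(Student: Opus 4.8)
The plan is to imitate Levine's argument for the affine Rojtman theorem: a pencil argument that reduces the comparison $n_{C_1}^{-1}(\alpha)=n_{C_2}^{-1}(\alpha)$ to an application of the Rigidity Theorem \ref{thm:rigidity}.

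The first step is a symmetrization. I would introduce an auxiliary curve $C'$, taken to be a general complete intersection of hypersurface sections through $\mathrm{Supp}(\alpha)$ that meets $C_1$ transversally; by the symmetry of the assertion it then suffices to prove $n_{C_1}^{-1}(\alpha)=n_{C'}^{-1}(\alpha)$, after which I rename $C':=C_2$. Next I would reduce to surfaces: cutting $X'$ by general hypersurface sections containing $C_1\cup C_2$ (the Altman--Kleiman Bertini theorem, exactly as in the proof of Proposition \ref{prop:key-prop}), one may assume $X'$ is a smooth affine surface; then, using Lemma \ref{lem:good-compactification}, choose a projective model $\ol{X'}$ with $\ol{X'}\setminus\ol D$ normal, compactify to $\ol{C_i}$ with $\ol{C_i}\cap\ol D=C_i\cap D$ (so the compactification does not disturb the modulus), and resolve the finitely many singularities lying over $\ol{C_i}\setminus C_i$ so that $\ol{C_i}$ and $\ol{X'}$ are regular along $\ol{C_i}\setminus C_i$; resolving the remaining singularities, I would also take $\ol{X'}$ to be Cohen--Macaulay.

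The heart of the proof is the pencil. I would construct $P=\{W_t\mid t\in\mathbb{P}^1\}$, a pencil of hyperplane sections of $\ol{X'}$ with special members $W_0=C_1\cup E_1$ and $W_\infty=C_2\cup E_2$, where the $E_i$ are integral curves meeting $\ol D$ properly and away from $\ol{X'}\setminus X'$ and disjoint from the base locus, with generic member $W_t$ integral, regular on $X'$, and avoiding $\mathrm{Sing}(\ol D_{\mathrm{red}})$, and whose base locus contains $\mathrm{Supp}(\alpha)$ and misses $\ol D$; since $C_1$ and $C_2$ meet transversally, blowing up each base point once yields $W_P$, with a blow-down $u\colon W_P\to\ol{X'}$ and a morphism $W_P\to\mathbb{P}^1$ realizing the pencil, the latter flat and projective. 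Localizing at $0\in\mathbb{P}^1$ gives $W_S\to S$ with reduced connected special fibre (this reducedness is where the Cohen--Macaulay-plus-generically-reduced implication enters) and hence constant arithmetic genus, so it is cohomologically flat in dimension $0$ and, as in Proposition \ref{prop:key-prop}, $\mathbf{Pic}^0_{W_S|D_S}\to S$ is representable by a separated scheme locally of finite type. Using the $n$-divisibility of generalized Jacobians over the algebraically closed field $k$, I would pass to a DVR $S'\to S$ dominating $S$ carrying a section $\gamma'$ with $n\gamma'(s')=Z_s$ and $n\gamma'(\ol{\eta'})=Z_{\ol\eta}$, where $Z$ is the divisor on $W_P$ cut out by $\alpha$, and arrange that the horizontal Cartier divisor $Z'$ representing $\gamma'$ lies over $X'\setminus D$. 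Pushing forward along $u$ then gives a map $Z'\colon S'\to\CH_0(X'|D)$ with $nZ'(s')=\alpha=nZ'(\ol{\eta'})$, so $Z'-Z'(s')$ is a family of $n$-torsion $0$-cycles with modulus parametrized by $S'$; by the Rigidity Theorem \ref{thm:rigidity} this family is constant, whence $n_{C_1}^{-1}(\alpha)=n^{-1}_{W_{\ol{\eta'}}}(\alpha)$. Running the same computation with $W_\infty$ in place of $W_0$ gives $n_{C_2}^{-1}(\alpha)=n^{-1}_{W_{\ol{\eta'}}}(\alpha)$, and comparing the two equalities finishes the proof.

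I expect the main obstacle to be not any single deep input but the geometric bookkeeping required to build the pencil: one must simultaneously make the generic member integral and regular over $X'$, keep the special fibres reduced, connected and of the prescribed shape $C_i\cup E_i$, keep the base locus precisely over $\mathrm{Supp}(\alpha)$ and away from $\ol D$, and control the horizontal divisor $Z'$ so that it avoids both $\ol D\times S'$ and $(\ol{X'}\setminus X')\times S'$. This last requirement has no analogue in the projective situation of \cite{tor-div-rec} and is exactly what the good compactification of Lemma \ref{lem:good-compactification} is designed to make possible; once it is available, the representability of $\mathbf{Pic}^0_{W_S|D_S}$ and the passage to the Rigidity Theorem go through formally, following the proof of Proposition 2.34 of \cite{tor-div-rec}.
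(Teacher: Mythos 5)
Your proposal follows the paper's own argument essentially step for step: the symmetrization via an auxiliary complete-intersection curve $C'$, the reduction to a surface with the good compactification of Lemma \ref{lem:good-compactification}, the pencil interpolating between $W_0=C_1\cup E_1$ and $W_\infty=C_2\cup E_2$, the representability of $\mathbf{Pic}^0_{W_S|D_S}$, the passage to a dominating DVR $S'$ via $n$-divisibility of generalized Jacobians, and the conclusion by the Rigidity Theorem applied to the family $Z'-Z'(s')$, comparing both special fibres to the geometric generic member. This matches the paper's proof, so the proposal is correct and takes essentially the same route.
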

\begin{rmk}\label{rmk:can-use-X-instead-Xprime} We note that the proof of Lemma \ref{lem:independence-curve} works if one replaces $X'$ with $X$.
\end{rmk}
\subsubsection{}\label{sssec:def-maps-n-1-after-independence} We resume the notations of Lemma \ref{lem:lem-blow-up}. By Lemma \ref{lem:independence-curve} and Remark \ref{rmk:can-use-X-instead-Xprime}, we  can construct well defined maps
\[n_{X'}^{-1}\colon Z_0(X'\setminus D) \to \CH_0(X'|D) \]
\[n_{X}^{-1}\colon Z_0(X\setminus D) \to \CH_0(X|D) \]
as follows. Since both $X$ and $X'$ are smooth, given any zero cycle $\alpha$ supported on $X\setminus D$ or on $X'\setminus D$, we can find an integral curve $C$, containing $\alpha$ and smooth along $\alpha$. In fact, we can assume that such curve $C$ is good in the sense of the definition given in \ref{def:definition-n-1}. 

For such $C$, we set $n^{-1}_X (\alpha)$ (or $n^{-1}_{X'}(\alpha)$) to be the class in $\CH_0(X|D)$ (resp. in $\CH_0(X'|D)$) of $n^{-1}_{C}(\alpha)$. By Lemma \ref{lem:independence-curve} and Remark \ref{rmk:can-use-X-instead-Xprime}, this is well defined and does not depend on the chosen good curve $\alpha$.
By construction, the maps $n_X^{-1}$  and $n^{-1}_{X'}$ satisfy $n_X^{-1}(n \alpha) = \alpha$ and $n_{X'}^{-1}(n\alpha) = \alpha$. Note that since given any two $0$-cycles $\alpha_1$ and $\alpha_2$ with modulus $D$ we can always find a good curve in $X$ (or in $X'$) containing the union of their supports, the maps $n_X^{-1}$ and $n_{X'}^{-1}$ are group homomorphisms. 


\subsubsection{}The proof of the following Lemma is identical to the corresponding statement in \cite{tor-div-rec}, Lemma 2.23 (taken from \cite[Lemma 3.2]{MarcTorsion}).
\begin{lem} Let $u\colon X'\to X$ be a blow-up at a point $x\in X\setminus D$. Then the following diagram commutes:
\[\xymatrix{ Z_0(X'\setminus D) \ar[r]^{n_{X'}^{-1}} \ar[d]_{u_*} & \CH_0(X'|D) \ar[d]_{u_*} \\ 
Z_0(X\setminus D) \ar[r]^{n_{X}^{-1}} & \CH_0(X|D) 
}
\]
\end{lem}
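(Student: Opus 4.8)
The plan is to reduce to generators and then transport a single divisibility witness up and down a well-chosen curve. Since all four arrows in the square are group homomorphisms (for the horizontal ones this is recorded at the end of \ref{sssec:def-maps-n-1-after-independence}), it suffices to check commutativity on the class $[P]$ of a closed point $P\in X'\setminus D$. Put $Q:=u(P)\in X\setminus D$, so that $u_*[P]=[Q]$ in $Z_0(X\setminus D)$. We will choose one good curve through $Q$ in $X$ whose strict transform is a good curve through $P$ in $X'$ on which $u$ restricts to an isomorphism; once this is done, the equality $u_*\bigl(n^{-1}_{X'}([P])\bigr)=n^{-1}_X([Q])$ follows by unwinding the definition of $n^{-1}$ from \ref{def:definition-n-1} and \ref{sssec:def-maps-n-1-after-independence}.

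For the curve, let $E=u^{-1}(x)$ be the exceptional divisor, $E\cong\mathbb{P}(T_xX)$ parametrizing the tangent directions to $X$ at $x$. Using the general-position/moving arguments already invoked in Proposition \ref{prop:key-prop} and in \ref{sssec:def-maps-n-1-after-independence}, we choose a good curve $\nu\colon C\hookrightarrow X$ through $Q$ which is smooth at $Q$ and such that: if $Q\neq x$ then $C$ also avoids $x$, while if $Q=x$ then $C$ is smooth at $x$ with tangent direction there equal to $P$. Let $\nu'\colon C'\hookrightarrow X'$ be the strict transform of $C$. Since $x\notin D$, the map $u$ is an isomorphism over a neighbourhood of $D$; and since $C$ is smooth at $x$ (or avoids it), one checks in local coordinates that $u$ restricts to an isomorphism $u\colon C'\xrightarrow{\sim}C$. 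Hence $C'$ is again integral, passes through $P$, meets $D$ properly, has $C'\setminus|(\nu')^*(D)|$ normal — so $C'$ is a good curve in $X'$ — and the identification $C'\cong C$ carries $(\nu')^*(D)$ to $\nu^*(D)$.

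It then remains a diagram chase. As $C'$ is good, $[P]$ is $n$-divisible in $\CH_0(C'|(\nu')^*(D))$; fix $\beta'$ with $n\beta'=[P]$ there, so that $n^{-1}_{X'}([P])$ is by definition the image $(\nu')_*(\beta')$ in $\CH_0(X'|D)$. Transporting $\beta'$ through the isomorphism $u\colon C'\xrightarrow{\sim}C$ gives a $0$-cycle $u_*\beta'$ on $C$ with $n\,u_*\beta'=u_*[P]=[Q]$ in $\CH_0(C|\nu^*(D))$, so $u_*\beta'$ is a legitimate witness for the $n$-divisibility of $[Q]$ on the good curve $C$. By \eqref{eq:zero-map-torsion}, $n^{-1}_C([Q])$ does not depend on the chosen witness — two witnesses differ by an $n$-torsion class of $\CH_0(C|\nu^*(D))$, which maps to $0$ in $\CH_0(X|D)$ — hence the image of $u_*\beta'$ in $\CH_0(X|D)$ equals $n^{-1}_X([Q])$. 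Finally, functoriality of proper pushforward of $0$-cycles along $C'\hookrightarrow X'\xrightarrow{u}X$, a composite which coincides with $C'\xrightarrow{\sim}C\hookrightarrow X$, gives $u_*\bigl((\nu')_*(\beta')\bigr)=\nu_*(u_*\beta')$; that is, $u_*\bigl(n^{-1}_{X'}([P])\bigr)=n^{-1}_X([Q])=n^{-1}_X(u_*[P])$, which is the commutativity of the square.

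The only genuinely delicate point, and it is really just bookkeeping with standard moving lemmas, is the curve selection in the second step: we must produce one good curve $C\subset X$ that in addition is smooth at $Q$ and realizes the prescribed tangent direction at $x$ when $Q=x$, so that its strict transform is good and maps isomorphically under $u$. This is harmless because $x\notin D$: the conditions imposed at $x$ are local and decoupled from the modulus condition along $D$, and can be added to the goodness requirements exactly as in \ref{sssec:def-maps-n-1-after-independence}. Everything else is the formal chase above, whose inputs are the definition of $n^{-1}$, the vanishing \eqref{eq:zero-map-torsion}, and Lemma \ref{lem:independence-curve}.
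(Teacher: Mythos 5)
Your argument is correct, and it is essentially the argument the paper points to (Levine's Lemma 3.2, reproduced as Lemma 2.23 of \cite{tor-div-rec}): reduce to point classes, produce a good curve in $X$ smooth at $u(P)$ — with prescribed tangent direction when $u(P)$ is the blown-up point — so that its strict transform is a good curve through $P$ mapped isomorphically by $u$, and transport the $n$-divisibility witness, using \eqref{eq:zero-map-torsion} for independence of the witness and Lemma \ref{lem:independence-curve} for independence of the curve. The only step you leave implicit, the existence of such a good curve with prescribed tangent at the centre, is the same Bertini/moving argument used throughout the paper and poses no difficulty since the conditions at $x\notin D$ are decoupled from the modulus.
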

\begin{lem}\label{lem:n-1-factors-through-rat-equiv} The map $n_{X}^{-1}$ factors through $\CH_0(X|D)$. 
\begin{proof}
Let $\nu\colon C\to X$ be a finite map from a normal curve $C$ to $X$ such that $\nu(C)$ is birational to $C$ and is in good position with respect to $D$. Let $f\in k(C)^\times$ be a rational function on $C$ that is congruent to $1$ modulo $\nu^{*}D$. We have to show that $n_{X}^{-1}(\nu_*({\rm div} f)) = 0$ in $\CH_0(X|D)$. Let $u\colon X' \to X$ be a sequence of blow-ups of $X$ at the singular points of $\nu(C)$ away from $D$. By the previous Lemma, we
have
\[  n_{X}^{-1}(\nu_*({\rm div} f))  = u_*(n_{X'}^{-1} ( \nu'_*({\rm div} f)) )\]
where we have a factorization $C\to C'\hookrightarrow X'$, where $\nu'\colon C'\to X'$ is the strict transform of $\nu(C)$ in $X'$. Note that $C'$ is regular away from $D$. It is then enough to show that $n_{X'}^{-1} ( \nu'_*({\rm div} f)) = 0$. But we have $n_{X'}^{-1} ( \nu'_*({\rm div} f)) = n^{-1}_{C'}({\rm div}(f))$, since $n_{X'}^{-1}$ of any cycle can be computed by choosing a good curve containing it, thanks to Lemma \ref{lem:independence-curve}, and the latter is clearly zero.
\end{proof}
\end{lem}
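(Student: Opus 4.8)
The plan is to verify that $n_X^{-1}$ kills every generating relation of $\CH_0(X|D)$, i.e.~every cycle of the form $\nu_*(\operatorname{div} f)$ where $\nu\colon C\to X$ is a finite morphism from a normal curve with $\nu(C)\not\subset D$ and $f\in G(C,\nu^*D)$ satisfies the modulus condition. Since $n_X^{-1}$ is a group homomorphism on $Z_0(X\setminus D)$ (as established in \ref{sssec:def-maps-n-1-after-independence}), it suffices to treat one such relation at a time. The key point is that although $\nu(C)$ may be singular, its singularities all lie away from $D$ (because $\nu$ is finite and $\nu^*D=\nu(C)^*D$ near $D$), so they can be removed by a sequence of blow-ups of $X$ with centers over $X\setminus D$, which is exactly the situation covered by Lemma \ref{lem:lem-blow-up} and the compatibility lemma immediately preceding this one.

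First I would blow up $X$ at the singular points of $\nu(C)$ lying outside $D$, obtaining $u\colon X'\to X$; by the preceding lemma the diagram relating $n_X^{-1}$ and $n_{X'}^{-1}$ via $u_*$ commutes, so
\[
n_X^{-1}(\nu_*(\operatorname{div} f)) = u_*\bigl(n_{X'}^{-1}(\nu'_*(\operatorname{div} f))\bigr),
\]
where $\nu'\colon C'\hookrightarrow X'$ is the strict transform — an integral curve that is regular away from $D$, hence \emph{good} in the sense of \ref{def:definition-n-1} (condition (a) is preserved since $\nu(C)\cap D$ is unchanged, condition (b) because $C'$ is regular, in particular normal, away from $|\nu'^*D|$). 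It therefore suffices to show $n_{X'}^{-1}(\nu'_*(\operatorname{div} f)) = 0$.

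Now I would use the defining property of $n_{X'}^{-1}$: by Lemma \ref{lem:independence-curve} its value on any cycle supported on $X'\setminus D$ can be computed through \emph{any} good curve containing that cycle, so in particular
\[
n_{X'}^{-1}(\nu'_*(\operatorname{div} f)) = n_{C'}^{-1}(\operatorname{div} f)
\]
computed through $C'$ itself. But $\operatorname{div} f = 0$ already in $\CH_0(C'|\nu'^*D)$ — this is precisely the statement that $f$ satisfies the modulus condition — so any $0$-cycle $\beta$ on $C'$ with $n\beta = \operatorname{div} f$ in $\CH_0(C'|\nu'^*D)$ may be taken to be a torsion class, and then $n_{C'}^{-1}(\operatorname{div} f) = \nu'_*(\beta) = 0$ in $\CH_0(X'|D)$ by \eqref{eq:zero-map-torsion}. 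Applying $u_*$ gives the result. The only subtle point — and the step I would flag as needing care — is making sure the strict transform $C'$ genuinely qualifies as a good curve and that $\operatorname{div} f$ pulls back correctly, i.e.~that $f$ still satisfies the modulus condition on $C'$; this is immediate since $u$ is an isomorphism over a neighborhood of $D$ and $C\to C'$ is birational, so $\nu'^*D \cong \nu^*D$ and the function $f$ is unchanged near $D$.
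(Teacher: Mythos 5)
Your proposal is correct and follows essentially the same route as the paper: blow up the singular points of $\nu(C)$ away from $D$, use the commutation of $n^{-1}$ with $u_*$ from the preceding lemma, and then compute $n_{X'}^{-1}$ on the strict transform $C'$ via Lemma \ref{lem:independence-curve}, where $\operatorname{div}(f)=0$ in $\CH_0(C'|\nu'^*D)$ by the modulus condition. The only quibble is your parenthetical claim that the singularities of $\nu(C)$ necessarily lie away from $D$ --- this need not hold, but it is also not needed, since goodness of $C'$ only requires normality away from $|\nu'^*D|$, which the blow-ups at points over $X\setminus D$ already provide.
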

We are finally ready to state and prove our main Theorem
\begin{thm}[See \ref{thm:Main-Theorem-Intro}]\label{thm:Main-Theorem}Let $X$ be a smooth affine $k$-variety of dimension at least $2$, $D$ an effective Cartier divisor on it. Then the Chow group of $0$-cycles on $X$ with modulus $D$, $\CH_0(X|D)$, is torsion free, except possibly for $p$-torsion if the characteristic of $k$ is $p>0$.
    \begin{proof} Thanks to Lemma \ref{lem:n-1-factors-through-rat-equiv}, we have for every $n$ prime to the characteristic of $k$, a well defined group homomorphism
        \[n_X^{-1}\colon \CH_0(X|D)\to \CH_0(X|D)\]
        and this is, by the remarks in \ref{sssec:def-maps-n-1-after-independence}, inverse to the multiplication by $n$, proving the claim.
        \end{proof}
    \end{thm}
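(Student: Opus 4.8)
The plan is to construct, for each integer $n$ prime to the characteristic of $k$, a group homomorphism $n_X^{-1}\colon \CH_0(X|D)\to \CH_0(X|D)$ that is a two-sided inverse to multiplication by $n$; the existence of such a map forces $\CH_0(X|D)$ to be uniquely $n$-divisible, hence free of $n$-torsion, and letting $n$ range over all integers prime to $p$ gives the theorem. The bulk of the work is therefore in producing this inverse map and checking it is well defined on cycles, compatible with blow-ups, and descends to rational equivalence with modulus.

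First I would fix $n$ prime to $\chark(k)$ and define $n_X^{-1}$ at the level of the free group $Z_0(X\setminus D)$ as in \ref{sssec:def-maps-n-1-after-independence}: given a $0$-cycle $\alpha$ supported on $X\setminus D$, choose a good curve $\nu\colon C\hookrightarrow X$ containing (the support of) $\alpha$, which exists since $X$ is smooth; because $C$ is good, $\Pic(C,\nu^*D)$ is $n$-divisible and $\CH_0(C|\nu^*D)$ is a quotient of it, so one can pick $\beta$ with $n\beta=\alpha$ in $\CH_0(C|\nu^*D)$, and set $n_X^{-1}(\alpha):=\nu_*(\beta)$. The key point making this unambiguous is \eqref{eq:zero-map-torsion}: any $n$-torsion class on a good curve dies in $\CH_0(X|D)$ (this is Proposition~\ref{prop:key-prop} together with Corollary~\ref{cor:torsion-zero-blow-up} and Remark~\ref{rmk:Pic-and-Chow-divisibility-Corollary}), so the ambiguity in $\beta$ disappears after pushing forward; independence of the chosen good curve is exactly Lemma~\ref{lem:independence-curve} and Remark~\ref{rmk:can-use-X-instead-Xprime}. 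Since any two $0$-cycles lie on a common good curve, $n_X^{-1}$ is additive, and by construction $n_X^{-1}(n\alpha)=\alpha$ already on $Z_0(X\setminus D)$.

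Next I would invoke Lemma~\ref{lem:n-1-factors-through-rat-equiv}, which says $n_X^{-1}$ kills the relations defining $\CH_0(X|D)$, so it descends to a homomorphism $n_X^{-1}\colon \CH_0(X|D)\to \CH_0(X|D)$. On $\CH_0(X|D)$ we then have $n_X^{-1}\circ (n\cdot -)=\id$ by the identity just noted. For the reverse composite, note $n\cdot n_X^{-1}(\alpha)=n\,\nu_*(\beta)=\nu_*(n\beta)=\nu_*(\alpha)=\alpha$ in $\CH_0(X|D)$, since $n\beta=\alpha$ already in $\CH_0(C|\nu^*D)$ and $\nu_*$ is a group homomorphism; hence $(n\cdot-)\circ n_X^{-1}=\id$ as well. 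Therefore multiplication by $n$ on $\CH_0(X|D)$ is an isomorphism for every $n$ coprime to $p$, which is precisely the assertion that $\CH_0(X|D)$ has no torsion away from $p$.

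The genuinely hard inputs are all upstream and may be assumed here: the rigidity Theorem~\ref{thm:rigidity}, the moving/pencil construction in Proposition~\ref{prop:key-prop} (with its use of the good compactification of Lemma~\ref{lem:good-compactification}, Altman--Kleiman Bertini, and representability of the relative Picard functor), and the curve-independence Lemma~\ref{lem:independence-curve}. Granting those, the remaining obstacle in the present argument is purely bookkeeping: one must be careful that ``good curve containing $\alpha$'' can always be arranged (smoothness of $X$ plus a Bertini argument gives an integral curve smooth along $\alpha$, and a further choice makes its complement of the modulus normal), and that the factorization through rational equivalence in Lemma~\ref{lem:n-1-factors-through-rat-equiv} is compatible with the blow-up reduction. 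Once those are in place the theorem follows formally.
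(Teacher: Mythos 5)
Your proposal is correct and follows essentially the same route as the paper: the proof of Theorem \ref{thm:Main-Theorem} is exactly the formal step of combining Lemma \ref{lem:n-1-factors-through-rat-equiv} with the construction and properties of $n_X^{-1}$ from \ref{sssec:def-maps-n-1-after-independence} (which rest on Proposition \ref{prop:key-prop}, Corollary \ref{cor:torsion-zero-blow-up} and Lemma \ref{lem:independence-curve}) to conclude that multiplication by $n$ is invertible on $\CH_0(X|D)$ for every $n$ prime to $p$. Your extra verification of the composite $(n\cdot-)\circ n_X^{-1}=\id$ is a harmless addition beyond what is needed for torsion-freeness.
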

\begin{rmk} One should note that Theorem \ref{thm:Main-Theorem} can not be directly pushed forward to encompass the $p$-torsion part of the Chow groups. For this, we refer the reader to \cite{BK}.
\end{rmk}    
    
\bigskip

\noindent{\bf Acknowledgments.} I wish to thank Marc Levine for much advice and for pointing out a gap in an earlier version of this note,  as well as for the financial support via the Alexander von Humboldt foundation and the DFG Schwerpunkt Programme 1786 "Homotopy theory and Algebraic Geometry".
I learned about many ideas on cycles on singular varieties from Amalendu Krishna during my joint work with him. I would like to thank him cordially for that prolific collaboration. I'm also grateful to J.~Cao, W.~Kai and R.~Sugiyama for a careful reading of this manuscript. 

I sincerely appreciate the referee's comments to an earlier draft of this paper, and for his/her efforts in improving and clarifying the exposition. 

\bibliography{bib} 
\bibliographystyle{siam}

\end{document}